\pgfplotsset{width=10cm,compat=1.9}
\def\@setauthors{%
  \begingroup
  \def\thanks{\protect\thanks@warning}%
  \trivlist
  \centering\footnotesize \@topsep30\p@\relax
  \advance\@topsep by -\baselineskip
  \item\relax
  \author@andify\authors
  \def\\{\protect\linebreak}

  \normalsize\lowercase{\authors}%
  
	\ifx\@empty\contribs
  \else
    ,\penalty-3 \space \@setcontribs
    \@closetoccontribs
  \fi
  \endtrivlist
  \endgroup
}
\def\@settitle{\begin{center}
\LARGE\lowercase{\@title}
  \end{center}%
}
\newcommand{\authoremail}[1]{\email{\href{mailto:#1}{\color{lightblue}{#1}}}}
\newcommand{\authoraddress}[1]{\address{\normalfont{#1}}}
\numberwithin{equation}{section}
\newtheorem{thm}{Theorem}[section]
\newtheorem{lma}[thm]{Lemma}
\newtheorem{cor}[thm]{Corollary}
\newtheorem{prop}[thm]{Proposition}
\newtheorem{ques}[thm]{Question}
\renewcommand{\epsilon}{\varepsilon}
\newcommand{\eps}{\varepsilon}
\newcommand{\rr}{\mathbb{R}^2}
\newcommand{\rd}{\mathbb{R}^d}
\newcommand{\zd}{\mathbb{Z}^d}
\renewcommand{\geq}{\geqslant}
\renewcommand{\leq}{\leqslant}
\newcommand{\hd}{\dim_{\textup{H}}}
\newcommand{\fs}{\dim^\theta_{\mathrm{F}}}
\newcommand{\fd}{\dim_{\mathrm{F}}}
\newcommand{\sd}{\dim_{\mathrm{S}}}
\newcommand{\gdk}{G(d,k)}
\newcommand{\gdo}{G(d,1)}
\newcommand{\gto}{G(2,1)}
\newcommand{\ddk}{d\gamma_{d,k}(V)}
\newcommand{\dhky}{d\H^k(y)}
\newcommand{\gamdk}{\gamma_{d,k}}
\newcommand{\R}{\mathbb{R}}
\newcommand{\rk}{\mathbb{R}^k}
\newcommand{\Z}{\mathbb{Z}}
\renewcommand{\H}{\mathcal{H}}
\newcommand{\N}{\mathbb{N}}
\renewcommand{\L}{\mathcal{L}}
\newcommand{\M}{\mathcal{M}}
\renewcommand{\S}{\mathcal{S}}
\newcommand{\spt}{\text{spt}\,}
\newcommand{\J}{\mathcal{J}}
\newcommand{\half}{\frac{1}{2}}
\DeclareRobustCommand\widecheck[1]{{\mathpalette\@widecheck{#1}}}
\def\@widecheck#1#2{%
    \setbox\z@\hbox{\m@th$#1#2$}%
    \setbox\tw@\hbox{\m@th$#1%
       \widehat{%
          \vrule\@width\z@\@height\ht\z@
          \vrule\@height\z@\@width\wd\z@}$}%
    \dp\tw@-\ht\z@
    \@tempdima\ht\z@ \advance\@tempdima2\ht\tw@ \divide\@tempdima\thr@@
    \setbox\tw@\hbox{%
       \raise\@tempdima\hbox{\scalebox{1}[-1]{\lower\@tempdima\box
\tw@}}}%
    {\ooalign{\box\tw@ \cr \box\z@}}}
\newcommand\reallywidehat[1]{%
\savestack{\tmpbox}{\stretchto{%
  \scaleto{%
    \scalerel*[\widthof{\ensuremath{#1}}]{\kern.1pt\mathchar"0362\kern.1pt}%
    {\rule{0ex}{\textheight}}
  }{\textheight}% 
}{2.4ex}}%
\stackon[-6.9pt]{#1}{\tmpbox}%
}
\definecolor{lightblue}{HTML}{2B77A4}
\colorlet{plotblue}{LightSkyBlue3!80}
\definecolor{darkred}{HTML}{9E0D0D}
\definecolor{darkyellow}{HTML}{b3b300}
\definecolor{darkorange}{HTML}{D86129}
\newcommand\numberthis{\addtocounter{equation}{1}\tag{\theequation}}
\title{A Fourier analytic approach to exceptional set\\estimates for orthogonal projections}
\author{Jonathan M. Fraser}
\thanks{JMF was financially supported by  a  \emph{Leverhulme Trust Research Project Grant} (RPG-2019-034).}
\author{Ana E. de Orellana}
\thanks{AEdO was financially supported by the University of St Andrews.}
\date{}
\begin{document}
\maketitle
\thispagestyle{empty}

\begin{abstract}
  Marstrand's celebrated projection theorem gives the Hausdorff dimension of the orthogonal projection of a Borel set in Euclidean space for almost all orthogonal projections. It is straightforward to see that sets for which the Fourier and Hausdorff dimension coincide have no exceptional  projections, that is, \emph{all} orthogonal projections satisfy the conclusion of Marstrand's theorem. With this in mind, one might believe that the Fourier dimension (or at least, Fourier decay) could be used to give better estimates for the Hausdorff dimension of the exceptional set in general. We obtain projection theorems and exceptional set estimates based on the Fourier spectrum; a family of dimensions that interpolates between the Fourier and Hausdorff dimensions. We apply these results to show that the Fourier spectrum can be used to improve several results for the Hausdorff dimension in certain cases, such as Ren--Wang's sharp bound for the exceptional set in the plane, Peres--Schlag's exceptional set bound and Bourgain--Oberlin's sharp $0$-dimensional exceptional set estimate.\\ \\
  \emph{Mathematics Subject Classification}: primary: 28A80, 42B10; secondary: 28A75, 28A78.
\\
\emph{Key words and phrases}: Projections, Marstrand Theorem, Fourier transform, Fourier dimension, Fourier spectrum, Hausdorff dimension.
\end{abstract}

\tableofcontents

\section{Introduction}

Orthogonal projections are among the most studied objects in Fractal Geometry. Although Marstrand's work in \cite{Mar54} showed that the orthogonal projection of a Borel set is typically as big as possible, there can be a large set of directions for which this does not hold. This immediately gives rise to the problem of bounding from above the dimension of this set of exceptions.  Since early work of Kaufman \cite{Kau68}, a lot of work has been devoted to this topic. However, after almost 60 years and in spite of  recent and significant progress, the picture is far from being complete. We refer the reader to \cite{FFJ15,Mat14} for a more thorough presentation on the work of Marstrand and its consequences in Fractal Geometry. A short summary of recent research will be given in Section~\ref{section:preliminariesProj}.

There are certain specific situations in  which one can improve on Marstrand's theorem. For example, it is easy to see that there are no exceptional projections for Salem sets, that is,  sets that have the same Fourier and Hausdorff dimension. Given this observation, one might expect that if the Fourier dimension of a set is positive, but not necessarily equal to its Hausdorff dimension, one may be able to obtain improved estimates on the Hausdorff dimension of the exceptional set. Indeed, the Fourier dimension does give an easy bound that improves results for general sets; see the discussion at the start of Section \ref{section:FDproj} or, more concretely,  Proposition~\ref{prop:boundmuV} (set $\theta=0$ and use the bound $\hd P_V(X) \geq \fd P_V(X)$). However, we show in Proposition~\ref{propo:counter_example} that it does not do more than that. Therefore, the Fourier dimension is perhaps too coarse to capture the full power of Fourier decay in dealing with projections.

 Instead of studying the exceptional set using the Fourier and Hausdorff dimensions in isolation, we will benefit from the method of `dimension interpolation', which is designed to extract more nuanced  information from a given set or measure by considering a parametrised  family of dimensions living between two dimensions of interest. In this case, the family of dimensions that we will consider is  the Fourier spectrum, introduced  in \cite{Fra22}, which continuously interpolates between the Fourier and Hausdorff dimensions.

We begin by analysing the well-studied case of the exceptional set for the Hausdorff dimension of projections, giving in Lemma~\ref{lemma:sharp1} an example that establishes  the (well-known folklore) sharpness of the bound proven by Ren and Wang in \cite{RW23}. After studying the role of the Fourier dimension and its limitations in this setting, in Sections~\ref{section:FourierProj}  and \ref{section:exceptionalSpectrum} we obtain non-trivial bounds for the Fourier spectrum of projections (Theorem~\ref{thm:spectrumProj} and Proposition \ref{prop:boundmuV}) and give a bound for the Hausdorff dimension of the exceptional set of directions for the Fourier spectrum (Theorem ~\ref{thm:exceptionalFourier}). Despite the estimate from Theorem~\ref{thm:exceptionalFourier} not being sharp in general, we show in Section~\ref{section:applications} that it can be used to improve on sharp exceptional set estimates for the \emph{Hausdorff} dimension in certain cases: if the Fourier analytic information captured by the Fourier spectrum is `strong enough', then non-trivial improvements can be made to many exceptional set estimates, see Proposition~\ref{prop:improveOberlin} and Corollaries~\ref{thm:exceptionalFouriercoro} and \ref{thm:exceptionalFourierConvolutions}. The sharp exceptional set estimate in the plane was recently obtained by Ren--Wang \cite{RW23} but in higher dimensions less is known.  We are able to improve the state of the art in higher dimensions under only very mild assumptions on the set itself, see Corollaries~\ref{genericimprove}, \ref{genericimprove2}, and Section~\ref{sec:example} where we consider an illustrative example.

\subsection{Notation}
Throughout this paper we write $A\lesssim B$ if there exists a constant $C>0$ such that $A\leq CB$ and $A\approx B$ if $A\lesssim B$ and $B\lesssim A$. If we wish to emphasise that the constant $C$ depends on some parameter $\lambda$ we shall express it as $A\lesssim_{\lambda}B$ or $A\approx_{\lambda}B$.

For a set $X\subseteq\rd$, we shall write $\M(X)$ for the set of Borel measures $\mu$ compactly supported on $X$ and such that $0<\mu(X)<\infty$.

For integers $1\leq k<d$, we write $G(d,k)$ for the Grassmannian manifold of $k$-dimensional planes $V\subseteq\rd$ equipped with the invariant Borel probability measure $\gamdk$ obtained from the Haar measure on the topological group of rotations around the origin. We write  $P_{V}:\rd\to V$ for the orthogonal projection onto $V\in\gdk$, which we identify with $\rk$. Given a measure $\mu\in\M(\rd)$, we will write $\mu_{V}$ for the push-forward measure under $P_{V}$, i.e. $\mu_{V}(B) = \mu\big( P_{V}^{-1}(B) \big)$ for Borel sets $B\subseteq\rk$. Given $y \in \mathbb{R}^k$ and $V\in\gdk$, we define  $y_{V} \in \rd$ by
\begin{equation}\label{eq:yv}
\{y_{V} \} = V\cap P_{V}^{-1}(y).
\end{equation}
We write $\hd$ to denote the Hausdorff dimension of a set or Borel measure and $\mathcal{H}^s$ to denote the $s$-dimensional Hausdorff measure.

\section{Preliminaries}
 
\subsection{Hausdorff dimension and orthogonal projections}\label{section:preliminariesProj}
One of the most well-known and influential results in Fractal Geometry is Marstrand's projection theorem.  This was proved in the plane by Marstrand   \cite{Mar54} and in higher dimensions by Mattila \cite{Mat75} and states that for Borel sets $X\subseteq\rd$ and $\gamdk$ almost all $V\in\gdk$, $\hd P_{V}(X) = \min\{k, \hd X \}$.

Marstrand's projection theorem motivated the study of the dimension of the set of projections for which the conclusion  does  not hold: the set of exceptional projections. Kaufman \cite{Kau68} addressed this question in the context of compact sets $X\subseteq\R^2$, and proved that for $u\in[0, \hd X]$,
\begin{equation}\label{eq:kaufmanbound}
		\hd \{ V\in\gto : \hd P_{V}(X)< u \}\leq u,
\end{equation}
which was proven to be sharp when $u=\hd X$ by Kaufman and Mattila in \cite{KM75}.
Kaufman's proof of \eqref{eq:kaufmanbound} was generalised in \cite[Theorem~5.1]{Mat15} to show that if $X\subseteq\rd$ has $\hd X\leq1$ then for all $u\in[0,\hd X]$,
\begin{equation}\label{eq:kaufmangeneral}
    \hd\{ V\in\gdo : \hd P_{V}(X)<u \}\leq d-2+u.
\end{equation}
Later, using different methods, Bourgain \cite[Theorem~4]{Bou10} and Oberlin \cite[Theorem~1.2]{Obe12} proved that for Borel sets $X\subseteq\rr$,
\begin{equation}\label{eq:Oberlinbound}
	\hd\{ V\in\gto : \hd P_{V}(X)< \hd X/2 \} = 0,
\end{equation}
and Oberlin conjectured that if $ \frac{\hd X}{2} \leq u \leq \min\{\hd X, 1\}$, then
\begin{equation}\label{eq:oberlin}
		\hd \{ V\in\gto : \hd P_{V}(X)<u \}\leq 2u-\hd X.
\end{equation}
Oberlin's conjecture \eqref{eq:oberlin} was recently proved in a breakthrough paper of Ren and Wang  \cite[Theorem~1.2]{RW23}, which built on significant recent progress  from various people; see, for example,  \cite{GSW19,OS23,OS23+,OSW24} and the references therein.  In fact, \cite[Theorem~4]{Bou10} provides a stronger `$\varepsilon$-improvement' on \eqref{eq:Oberlinbound}, and this  played an important role in the development of the problem, including in \cite{RW23}.

Most of these results have analogues to higher-dimensional projections onto planes $V\in\gdk$ for $k\geq2$. For example, \cite[Corollary~5.12]{Mat15} generalised \eqref{eq:kaufmangeneral} to show that if $\hd X \leq k$, then for any $u\in[0,\hd X]$
\begin{equation}\label{eq:mattilaboundgeneral}
    \hd\{ V\in \gdk : \hd P_{V}(X)<u \}\leq k(d-k-1)+u,
\end{equation}
which was shown in \cite[Theorem~5]{KM75} to be sharp when $u = \hd X\leq k$.
Peres and Schlag \cite[Proposition 6.1]{PS00} gave the following upper bound for Borel sets $X\subseteq\rd$ without restrictions on their Hausdorff dimension. For $u\in[0,k]$,
\begin{equation}\label{eq:peresschlagbound}
    \hd\{ V\in \gdk : \hd P_{V}(X)< u \}\leq k(d-k)+u-\hd X.
\end{equation}
This generalised the result of Falconer \cite[Theorem~1~(i)]{Fal82} for $u=1$ and is sharp for $u=k$ as shown in \cite[Example 5.13]{Mat15}. Note that Peres and Schlag's bound is only better than \eqref{eq:mattilaboundgeneral} when $\hd X>k$. Although their bound is stated only for measures, for the same reason that we give in the proof of Theorem~\ref{thm:exceptionalFourier}, the result holds for sets.

Bourgain's estimate \eqref{eq:Oberlinbound} was generalised by He in \cite{He20}, which gives as a corollary \cite[Corollary~3]{He20} that for Borel sets (or more generally, analytic) $X\subseteq \rd$ with $\hd X<d$,
\begin{equation*}
    \hd\{ V\in \gdk : \hd P_{V}(X)< k\hd X/d\}\leq k(d-k)-1.
\end{equation*}

\subsection{Dimension and Fourier transforms}

Recall that Frostman's lemma allows us to write the Hausdorff dimension of a Borel set $X$ in terms of the $s$-energy $I_{s}$ of measures $\mu\in\M(X)$,
\begin{equation*}
		I_{s}(\mu) = \iint |x-y|^{-s}\,d\mu(x)\,d\mu(y),
\end{equation*}
as
\begin{equation*}
		\hd X = \sup\{ s\geq0 : \exists \mu\in\M(X) : I_{s}(\mu)<\infty \}.
\end{equation*}

When using this definition of the Hausdorff dimension, techniques are usually referred to as potential-theoretic, see \cite{Fal03} for more about Hausdorff dimension and energy integrals. In fact, \cite{Kau68} gave an elegant  potential-theoretic proof of Marstrand's theorem using arguments from Fourier analysis.

Fourier transforms can also be used to represent energy integrals. If $\mu$ is a finite Borel measure, we define its \emph{Fourier transform} by
\begin{equation*}
		\widehat{\mu}(z) = \int e^{-2\pi iz\cdot x}\,d\mu(x).
\end{equation*}
Using that the Fourier transform of $|z|^{-s}$ is, in the distributional sense, a constant multiple of $|z|^{s-d}$, and Parseval's theorem, the $s$-energy of $\mu\in\M(\rd)$, for $s\in(0,d)$, may be expressed as
\begin{equation*}
		I_{s}(\mu) \approx_{d,s} \int \big| \widehat{\mu}(z) \big|^2 |z|^{s-d}\,dz.
\end{equation*}

Note that the $s$-energy can be interpreted as a Sobolev norm. Thus, having finite energy gives information regarding the smoothness of the measure. This relation between the Hausdorff dimension of sets and the Fourier transform of measures they support motivates the definition of \emph{Fourier dimension} of measures
\begin{equation*}
		\fd \mu = \sup\big\{ s\geq0 : \big| \widehat{\mu}(z) \big|\lesssim |z|^{-\frac{s}{2}} \big\},
\end{equation*}
and of Borel sets $X\subseteq\rd$
\begin{equation*}
		\fd X = \sup\big\{ \min\{ \fd \mu,  d\} : \mu\in\M(X) \big\}.
\end{equation*}

For a Borel set $X\subseteq\rd$, $0\leq\fd X\leq\hd X\leq d$ and all of these inequalities can be strict in any combination. Sets for which the Fourier and Hausdorff dimensions coincide are called \emph{Salem sets}. Constructing non-trivial deterministic Salem sets is challenging, but random examples abound. The first construction of a Salem set was a random Cantor set in $\R$ of any dimension between $0$ and $1$ given by Salem \cite{Sal51}. Kaufman \cite{Kau81} calculated the Fourier dimension of a set defined by Jarnik in \cite{Jar31}, thus giving the first explicit construction of a Salem set. Some examples of explicit Salem sets in $\rd$ for $d\geq2$ can be found in \cite{FH23} and \cite{Gat67}. We refer the reader to \cite{Ham17} for a more detailed summary of the history of Salem sets. 
% To the best of our knowledge  \cite{FH23} is  the only explicit example giving fractal Salem sets in $\rd$ for $d \geq 2$.  In the construction of \cite{FH23} the general idea is to build a measure that `cannot see' any arithmetic structure in the set. 

An important (and simple) observation is that for Salem sets, there are no exceptional directions for the projections, see the beginning of Section~\ref{section:FDproj}. In fact, one can say slightly more.  If $X\subseteq\rd$ is a Borel set with $\fd X = t$, then for all $u\leq t$,
\begin{equation*}
		\big\{ V\in\gdk : \hd P_{V}(X) < \min\{k,u\} \big\} =\varnothing.
\end{equation*}
However, we show in Proposition~\ref{propo:counter_example} that nothing can be said for $u>t$ using the Fourier dimension alone.

The definition of Hausdorff dimension for sets using energy integrals can be extended naturally to measures, defining the \emph{Sobolev dimension} of a measure $\mu\in\M(\rd)$ by
\begin{equation*}
	\sd \mu = \sup\bigg\{ s \in \R : \int_{\rd}\big|\widehat{\mu}(x)\big|^2 |x|^{s-d}\,dx<\infty \bigg\}.
\end{equation*}
This concept goes back to Peres--Schlag \cite{PS00}; see also \cite{Mat15}.  For any Borel measure $\mu \in \M(\rd)$, $0 \leq \fd \mu\leq\sd \mu$ and $\hd \mu \geq \min\{d, \sd \mu\}$.  Contrary to what one might expect, both the Fourier and Sobolev dimensions of measures may exceed the Hausdorff dimension of the ambient space. Take as an example the Lebesgue measure restricted to $[0,1]$, $\L^1\big|_{[0,1]}$. This measure satisfies $\fd \L^1\big|_{[0,1]} = \sd\L^1\big|_{[0,1]} = 2>\hd[0,1]$.

\subsection{Fourier spectrum}

In \cite{Fra22}, the first-named author defined the Fourier spectrum, a family of dimensions lying between the Fourier and the Sobolev dimension of measures. For this, he defined the $(s,\theta)$-energies of a measure $\mu\in\M(\rd)$, for $\theta\in(0,1]$ and $s\geq0$, as
\begin{equation*}
	\J_{s,\theta}(\mu) = \bigg( \int_{\rd} \big| \widehat{\mu}(z) \big|^{\frac{2}{\theta}}|z|^{\frac{s}{\theta}-d}\,dz \bigg)^\theta,
\end{equation*}
and for $\theta = 0$,
\begin{equation*}
	\J_{s,0}(\mu) = \sup_{z\in\rd} \big| \widehat{\mu}(z) \big|^2|z|^{s}.
\end{equation*}
Then the \emph{Fourier spectrum} of $\mu$ at $\theta$ is
\begin{equation*}
	\fs \mu = \sup\{ s \in \R : \J_{s,\theta}(\mu)<\infty \},
\end{equation*}
and for each $\theta\in[0,1]$, $\fd \mu\leq\fs \mu\leq\sd \mu$, with equality on the left if $\theta= 0$ and equality on the right if $\theta = 1$. As a function of $\theta$, $\fs\mu$ is continuous for $\theta\in(0,1]$ by \cite[Theorem~1.1]{Fra22} and, in addition,  continuous at $\theta=0$ provided $\mu$ is compactly supported by \cite[Theorem~1.3]{Fra22}.

For  a Borel set $X\subseteq\rd$, the \emph{Fourier spectrum} is
\begin{equation*}
	\fs  X = \sup\big\{ \min\{\fs \mu, d\} : \mu\in\M(X) \big\}.
\end{equation*}
Then,  for all $\theta\in[0,1]$, $\fd X\leq\fs  X \leq \hd X$, with equality on the left if $\theta= 0$ and equality on the right if $\theta = 1$. Moreover,  $\fs X$ is continuous for all $\theta\in[0,1]$ by \cite[Theorem~1.5]{Fra22}.

\section{Fourier dimension and exceptional sets for the Hausdorff dimension}

We  begin by building an example to show the pointwise sharpness of the inequality \eqref{eq:oberlin} proven in \cite[Theorem~1.2]{RW23}.   The  construction, which was described to us by Tuomas Orponen, is similar to the one given in \cite{KM75}, however, more care is required since there are more parameters to look after.

\begin{lma}\label{lemma:sharp1}
	For all $s\in(0,1]$ and all $u\in(0,s)$ there exists a compact set $X_{u,s}\subseteq\R^2$ with $\hd X_{u,s} = s$ such that
	\begin{equation*}
		\hd \{ V\in\gto : \hd P_{V}(X_{u,s})\leq u \} = \max\{ 0, 2u-s\}.
	\end{equation*}
\end{lma}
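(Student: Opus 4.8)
The plan is to build $X_{u,s}$ as a product-type set adapted to the two regimes $2u \le s$ and $2u > s$, combining a Salem-type factor (to guarantee that \emph{most} directions project to a set of full dimension) with a ``bad'' factor (to create the exceptional set of the prescribed size). Concretely, I would first dispose of the easy regime $2u \le s$: here $\max\{0,2u-s\}=0$, and the Bourgain--Oberlin bound \eqref{eq:Oberlinbound} already gives $\hd\{V : \hd P_V(X) < s/2\} = 0$, so taking \emph{any} compact set of dimension $s$ with no exceptional directions below $s/2$ — for instance a Salem set of dimension $s$ in $\R^2$, whose projections \emph{all} have dimension $\min\{1,s\}=s$ — yields $\hd\{V : \hd P_V(X) \le u\} = \varnothing$ when $u < s/2$, hence Hausdorff dimension $0$ (with the convention $\hd \varnothing = 0$); for $s/2 \le u < s$ one still needs the exceptional set to be at most $0$-dimensional, which again follows from \eqref{eq:Oberlinbound} once $u$ is not too large, but to be safe across the whole range $2u \le s$ one can instead lean on Kaufman's bound and the Salem property as in the discussion preceding Lemma~\ref{lemma:sharp1}.

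The substantive case is $2u > s$, where we must produce a set whose exceptional set has dimension \emph{exactly} $2u - s$ — the upper bound is Ren--Wang \eqref{eq:oberlin}, so the real content is the matching lower bound. Following the Kaufman--Mattila template from \cite{KM75}, I would fix a ``direction set'' $E \subseteq \gto \cong \mathbb{S}^1$ (or an interval of slopes) with $\hd E = 2u - s$ and arrange $X_{u,s}$ so that for every $V \in E$ the projection $P_V(X_{u,s})$ is ``small'', of dimension at most $u$. The standard device is to take $X_{u,s}$ to be (a piece of) a set of the form $\bigcup_{V \in E} (\ell_V \cap F)$ where $\ell_V$ is a line in direction $V^\perp$ and $F$ is a suitably chosen Cantor set, or, more efficiently, to realise $X_{u,s}$ via a self-similar / Moran construction in which the generation-$n$ pieces are aligned so that projecting in any direction of $E$ collapses dimension by the correct amount. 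One then checks: (i) $\hd X_{u,s} = s$, by exhibiting a Frostman measure of exponent $s$ on $X_{u,s}$ (energy estimate) and a matching covering; (ii) for $V \in E$, $\hd P_V(X_{u,s}) \le u$, from the geometry of the construction (the projections of the generation-$n$ cylinders overlap in a controlled way, reducing the effective number of intervals); and (iii) for $V \notin E$, $\hd P_V(X_{u,s}) > u$ — in fact one wants $= s$ for a.e.\ $V$, which can be got for free if the underlying Cantor set factor is taken to be Salem of dimension $s$, since then \emph{no} direction outside the degeneracy locus $E$ is exceptional.

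The main obstacle, and where ``more care is required'' than in \cite{KM75}, is controlling \emph{all three} of the target dimension $s$, the projection dimension bound $u$, and the exceptional-set dimension $2u - s$ \emph{simultaneously} with a single explicit construction — in \cite{KM75} only the endpoint $u = s$ is treated, so there the projection-degeneracy and the dimension of $X$ are tied together, whereas here $u$ and $s$ are independent parameters and the Cantor ratios in the ``spatial'' and ``direction'' directions must be tuned against each other. Concretely, the delicate point is the lower bound $\hd\{V : \hd P_V(X_{u,s}) \le u\} \ge 2u - s$: one must verify that the set $E$ of genuinely exceptional directions is not smaller than designed, i.e.\ that the dimension drop on $E$ really does persist in the limit and is not an artifact of finite generations, which requires a careful Frostman measure on $E$ together with a transversality/non-degeneracy estimate showing that for $V \notin E$ the projected measure still has finite $u'$-energy for some $u' > u$. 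I would handle this by making the construction a ``graph-directed'' Moran set whose symbolic dynamics explicitly encodes $E$, so that the energy integrals for $\mu_V$ factor through the energy of the natural measure on $E$, reducing (iii) and the lower bound to a single clean computation.
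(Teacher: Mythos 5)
Your high-level outline agrees with the paper's: handle $2u\leq s$ via Bourgain--Oberlin \eqref{eq:Oberlinbound} (any compact set of dimension $s$ then works, since $\hd\varnothing = 0 = \max\{0,2u-s\}$ is forced), note that Ren--Wang \eqref{eq:oberlin} already gives the upper bound, and in the regime $2u>s$ concentrate on building a set together with a direction set $E$ of dimension $2u-s$ all of whose projections collapse to dimension $\leq u$. But past this outline the proposal is a survey of candidate devices (unions $\bigcup_{V\in E}(\ell_V\cap F)$, Moran constructions, graph-directed systems with a Salem factor) rather than an argument: you never write down $X_{u,s}$, and you never verify the only step that carries the content, namely that every $V\in E$ really drops the projection to dimension $\leq u$. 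You also spend effort on your step (iii) ($V\notin E$ are not exceptional), but this is superfluous for the lemma: once $E\subseteq\{V : \hd P_V(X_{u,s})\leq u\}$, the lower bound $\hd\{V : \hd P_V(X_{u,s})\leq u\}\geq\hd E = 2u-s$ is automatic, and the matching upper bound is exactly Ren--Wang, with no need for a Salem factor or a dichotomy on $V$.

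The paper's construction is considerably more elementary than anything in your list, and the dimension-drop mechanism is arithmetic rather than dynamical. Fix a rapidly increasing integer sequence $(\eta_m)$ and let $A,B,C\subseteq[0,1]$ be the Eggleston-type sets of points within $\eta_m^{-1}$ of the lattices $\eta_m^{-u}\Z$, $\eta_m^{-(s-u)}\Z$, $\eta_m^{-(2u-s)}\Z$ respectively for every $m$; then $\hd A=u$, $\hd B=s-u$, $\hd C=2u-s$, and since Hausdorff and packing dimensions agree for these sets, $\hd(A\times B)=s$. Take $X_{u,s}=A\times B$ and $E$ the directions with slopes in $C$. The engine is the identity $(s-u)+(2u-s)=u$: projecting $(\eta_m^{-u}z_1,\eta_m^{-(s-u)}z_2)$ onto a line of slope $\eta_m^{-(2u-s)}z_3$ gives (up to normalisation) $\eta_m^{-u}z_1+\eta_m^{-u}z_2z_3=\eta_m^{-u}(z_1+z_2z_3)$, a point of the $\eta_m^{-u}$-lattice, so at scale $\eta_m^{-1}$ the projection of $A\times B$ in any $C$-slope direction is covered by $\lesssim\eta_m^{u}$ intervals of length $\eta_m^{-1}$. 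Intersecting over $m$ gives $\hd P_V(A\times B)\leq u$ for every $V\in E$, and hence $\hd\{V:\hd P_V(A\times B)\leq u\}\geq\hd C=2u-s$. The ``extra care'' over \cite{KM75} amounts precisely to keeping three exponents $u$, $s-u$, $2u-s$ in play and arranging this cancellation; without identifying such a concrete mechanism the proposal has no proof of the lower bound.
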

\begin{proof}
	 Let $s\in(0,1]$. If $u<s/2$ then by \eqref{eq:Oberlinbound} we know that the dimension of the exceptional set is $0$. Thus, we only consider the case $u\in[s/2,s)$. Let $(\eta_m)_{m\in\N}$ be a rapidly increasing sequence of positive integers; say $\eta_{m+1} \geq \eta_{m}^m$, and define the sets
\begin{align*}
		A &= \big\{ x\in[0,1] : \forall m\in\N, d(x,\eta_{m}^{-u}\Z)\leq \eta_{m}^{-1} \big\};\\
		B &= \big\{ x\in[0,1] : \forall m\in\N, d(x,\eta_{m}^{-(s-u)}\Z)\leq \eta_{m}^{-1} \big\};\\
		C &= \big\{ x\in[0,1] : \forall m\in\N, d(x,\eta_{m}^{-(2u-s)}\Z)\leq \eta_{m}^{-1} \big\}.
\end{align*}
Here  $d(x,Y) = \inf\{ |x-y| :  y \in Y\}$. By \cite[Theorem 10]{Egg52} these sets have $\hd A = u,~\hd B =s-u$ and $\hd C=2u-s$, and, since their Hausdorff and packing dimensions are equal, 
\begin{equation*}
		\hd (A\times B) = \hd A + \hd B = s.
\end{equation*}
By Marstrand's theorem, since $s \leq 1$, then for $\gamma_{2,1}$ almost all $V\in\gto$, $\hd P_{V}(A\times B) = s$.  Also, $\hd (A+cB) = 2u-s$ for all $c\in C$. To prove this, fix $m\in\N$ and define the discrete sets
\begin{align*}
	A_{m} &= \{ \eta_{m}^{-u}z : z\in\Z \};\\
	B_{m} &= \{ \eta_{m}^{-(s-u)}z : z\in\Z \};\\
	C_{m} &= \{ \eta_{m}^{-(2u-s)}z : z\in\Z \}.
\end{align*}
The projection of a point of the form $(\eta_{m}^{-u}z_{1},\eta_{m}^{-(s-u)}z_{2})\in A_{m}\times B_{m}$ onto a line with slope $\eta_{m}^{-(2u-s)}z_{3}\in C_{m}$ is, up to a constant scaling factor, of the form
\begin{equation*}
	(\eta_{m}^{-u}z_{1},\eta_{m}^{-(s-u)}z_{2})\cdot (1,\eta_{m}^{-(2u-s)}z_{3}) = \eta_{m}^{-u}z_1 + \eta_{m}^{-u}(z_{2}z_{3})  = \eta_{m}^{-u}(z_1 + z_{2}z_{3}).
\end{equation*}
This implies that, for each $m\in\N$, the projections of $A_{m}\times B_{m}$ onto lines with slopes given by $C_{m}$ are contained in $A_{m}$.

Now, for each $\eta_{m}^{-(2u-s)}z_{3}\in C_{m}$, consider the interval of length $2\eta_{m}^{-1}$ around $\eta_{m}^{-(2u-s)}z_{3}$.  The union of these intervals constitutes a set of directions for which the projection of $A \times B$ has dimension at most $\hd A=u$ at scale $\eta_m^{-1}$ (that is, the projection may be covered by a constant times $\eta_m^{u}$ many intervals of length $\eta_m^{-1}$). Intersecting these sets  over all $m\in\N$ we get the set $C$. This shows that for the directions with slopes given by $C$, the Hausdorff dimension of the projection of $A\times B$ is at most $\hd A = u$. It then follows that the exceptional set
\begin{equation*}
		\{ V\in\gto : \hd P_{V}(A\times B) \leq u \},
\end{equation*}
contains a copy of $C$. Thus,
\begin{align*}
		\hd \{ V\in\gto : \hd P_{V}(A\times B) \leq u \} &\geq\hd C \\
&=2u-s\\
&=2u-\hd (A \times B).
\end{align*}
Since the reverse inequality is true by \cite[Theorem~1.2]{RW23}, letting $X_{u,s} = A\times B$ we have the desired result.
\end{proof}

\subsection{Fourier dimension and orthogonal projections}\label{section:FDproj}

Given a measure $\mu\in\M(\rd)$, an integer $1\leq k<d$ and $V\in\gdk$,
\begin{equation}\label{eq:projmeasure}
    \widehat{\mu_{V}}(y) = \int_{\rd}e^{-2\pi iy\cdot P_{V}(x)}\,d\mu (x)= \int_{\rd}e^{-2\pi i y_V\cdot x}\,d\mu (x) = \widehat{\mu}(y_{V})
\end{equation}
recalling the definition of $y_{V}$ from \eqref{eq:yv}. Thus, for all $V\in\gdk$ and finite Borel measures, $\fd\mu\leq\fd\mu_{V}$ which implies that if $X\subseteq\rd$ is a Borel set, then for all $V\in\gdk$, $\hd P_{V}(X) \geq \fd P_{V}(X) \geq \min\{ k,\fd X \}$. In particular, for Salem sets, there are no exceptions to Marstrand's theorem. We show in the following proposition that one cannot say anything more than this using the Fourier dimension alone.  That is, knowledge of the Fourier dimension of $X$ does not give any information about the dimension of the set of $V$ for which $\hd P_{V}(X) < u$ as soon as $u> \fd X$.

\begin{prop}\label{propo:counter_example}
	For any $s\in(0,1]$ and $t\in(s/2,s)$ there exists a compact set $X\subseteq\R^2$ with $\hd X = s$ and $\fd X = t$ such that for $u<t$,
\begin{equation*}
    \hd\{ V\in\gto : \hd P_{V}(X)\leq u \} =0,
\end{equation*}
and for $u\geq t$,
\begin{equation*}
  \hd\{ V\in\gto : \hd P_{V}(X)\leq u \} \geq 2t-s.
\end{equation*}
That is, the dimension of the exceptional set has a jump discontinuity at $\fd X$ from $0$ to the largest value it could possibly take according to \eqref{eq:oberlin}.
\end{prop}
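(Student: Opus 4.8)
The plan is to build the set $X$ as a union of two pieces: a ``Salem-type'' piece $S$ of Hausdorff (and Fourier) dimension exactly $t$ which by itself has no exceptional directions, and a copy of the extremal set $X_{u,s}$ from Lemma~\ref{lemma:sharp1} scaled down in dimension so as not to raise the Fourier dimension above $t$ but which still forces a large exceptional set for projections at level $u \ge t$. Concretely, I would take a Salem subset $S \subseteq [0,1]^2$ with $\hd S = \fd S = t$ (such sets exist by the constructions cited in the excerpt, e.g. random Cantor-type sets or explicit examples in the plane), and let $Y = A \times B \subseteq [0,1]^2$ be exactly the set from the proof of Lemma~\ref{lemma:sharp1} for the given pair $(u,s)$, which has $\hd Y = s$ and whose exceptional set $\{V : \hd P_V(Y) \le u\}$ contains a copy of $C$ with $\hd C = 2u-s$ (or, since here we only need the inequality for $u \ge t$, a set of dimension $\ge 2t - s$ by monotonicity — one checks $2t-s \le 2u-s$ and adjusts the construction parameters so that the level-$t$ exceptional set already has dimension $\ge 2t-s$). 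Then set $X = S \cup Y'$ where $Y'$ is a translate of $Y$ placed far from $S$. I should double-check that the natural thing to aim for is $\hd X = s$ (clear, since $\hd S = t < s = \hd Y$) and that the Fourier dimension behaves; this is the delicate point.

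The key steps, in order, are: (i) fix a compact Salem set $S$ in the plane of dimension $t$; (ii) invoke Lemma~\ref{lemma:sharp1} (its proof) to obtain $Y = A\times B$ with $\hd Y = s$, $\hd P_V(Y) \le u$ for a set of directions of dimension $\ge 2u - s \ge 2t-s$, and note that after possibly replacing $u$ by $t$ in that construction one gets $\hd P_V(Y') \le t \le u$ for a copy of $C$ of dimension $2t-s$; (iii) set $X := S \cup (Y' + v)$ for a suitable vector $v$ so the two pieces are disjoint and well-separated; (iv) verify $\hd X = \max\{\hd S, \hd Y'\} = s$ by finite stability of Hausdorff dimension; (v) verify $\fd X = t$; (vi) for $u < t$: since $\fd X = t$, the remark recalled just before the proposition (equivalently Proposition~\ref{prop:boundmuV} with $\theta=0$, or the displayed statement that $\{V : \hd P_V(X) < \min\{k,u\}\} = \varnothing$ when $u \le \fd X$) gives that the exceptional set is empty, hence of dimension $0$; (vii) for $u \ge t$: since $P_V(X) \supseteq P_V(Y'+v)$, we get $\hd P_V(X) \ge \hd P_V(Y')$, but we need the \emph{upper} bound $\hd P_V(X) \le u$ on the bad directions — here use that $P_V(X) = P_V(S) \cup P_V(Y'+v)$ and on the copy of $C$ we have $\hd P_V(Y') \le t \le u$, while $\hd P_V(S) \le \hd S = t \le u$ as well, so $\hd P_V(X) \le u$ for every $V$ in (a copy of) $C$, giving the lower bound $2t-s$ on the dimension of the exceptional set.

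The main obstacle is step (v): showing that adjoining the high-dimensional set $Y$ does not increase the Fourier dimension of the union beyond $t$. This is not automatic — Fourier dimension is \emph{not} finitely stable in general (a union of two sets can have Fourier dimension strictly larger than either). The standard fix is that any $\mu \in \M(X)$ decomposes as $\mu = \mu|_S + \mu|_{Y'+v}$, and $\widehat{\mu} = \widehat{\mu|_S} + \widehat{\mu|_{Y'+v}}$; one controls $\widehat{\mu|_S}$ using $\fd S = t$, so the obstruction is entirely the decay of $\widehat{\mu|_{Y'+v}}$. For the specific arithmetic set $Y = A\times B$ built from the sequence $(\eta_m)$, the relevant measures are (weak-$*$ limits of) normalized counting measures on $\eta_m$-periodic point sets, whose Fourier transforms are concentrated near a sparse set of frequencies and are large (of order the total mass) along those frequencies; a direct computation shows $\fd$ of any measure on $A$, on $B$, hence on $A\times B$, is $0$ (indeed these Egg-type sets are about as far from Salem as possible), so in fact $\fd Y' = 0$ and $\fd(\mu|_{Y'+v})$ contributes nothing, giving $\fd X = \max\{t, 0\} = t$ after checking the cross terms vanish because the supports are disjoint and one can also take $\mu$ supported on $S$ alone to achieve $\fd X \ge t$. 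I would carry out this Fourier-transform estimate for the Egg set explicitly — it is the technical heart of the proof — and then the remaining verifications are routine.
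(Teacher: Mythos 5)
Your overall construction matches the paper's: take the union of a compact planar Salem set of dimension $t$ with (a copy of) the extremal set $X_{t,s}$ from Lemma~\ref{lemma:sharp1}, and then the $u<t$ case is the easy observation about Salem sets while the $u\ge t$ case is inherited from the exceptional set of $X_{t,s}$. You also correctly spot that the parameter in Lemma~\ref{lemma:sharp1} should be $t$ (not $u$) so that a single fixed $X$ works for all $u\ge t$.

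The genuine gap is in your step (v), which you identify as the ``technical heart.'' You propose to prove $\fd X \le t$ by decomposing an arbitrary $\mu \in \M(X)$ as $\mu|_S + \mu|_{Y'}$ and then ``controlling $\widehat{\mu|_S}$ using $\fd S = t$.'' This does not work: $\fd S = t$ only says that \emph{some} measure on $S$ has Fourier decay of order $t/2$; it gives no control on the Fourier transform of an arbitrary restriction $\mu|_S$. Without that, the triangle-inequality / ``cross terms vanish'' argument has nothing to bite on, and the appeal to $\fd Y' = 0$ (itself asserted, not proved) does not by itself rule out cancellation in $\widehat{\mu|_S}+\widehat{\mu|_{Y'}}$ producing faster decay than either piece. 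Indeed Fourier dimension of sets is not finitely stable, precisely because decompositions like this are hard to control, so this route is both unnecessary and incomplete as written.

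The paper avoids the issue entirely by reading the Fourier dimension off the projection structure. From $\widehat{\mu_V}(y)=\widehat{\mu}(y_V)$ one has $\fd X \le \fd P_V(X) \le \hd P_V(X)$ for every $V$ (using $\fd X \le \hd X = s \le 1$). Since $\hd\{V : \hd P_V(X_{t,s}) \le t\} = 2t-s > 0$, there exists a direction $V$ with $\hd P_V(X_{t,s}) \le t$; this alone forces $\fd X_{t,s} \le t$, and the same observation applied to $A\cup B$ (using $\hd P_V(B)=t$ for all $V$) gives $\fd(A\cup B)\le t$. Combined with $\fd(A\cup B)\ge \fd B = t$ this yields $\fd X = t$ with no Fourier computations at all. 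You should replace your step (v) with this argument; everything else in your outline then goes through.
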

\begin{proof}
	Let $s\in(0,1]$, $t\in(s/2,s)$ and $A\coloneqq X_{t,s}\subseteq\R^2$ be as in Lemma~\ref{lemma:sharp1}, that is, $A$ is compact,  $\hd A = s$, and
\begin{equation} \label{dimbound1}
		\hd \{ V\in\gto : \hd P_{V}(A)\leq t \} = 2t-s.
\end{equation}
Using \cite{FH23}, define a compact Salem set $B\subseteq\R^2$ of dimension $t$ and assume that $A$ and $B$ are disjoint. Clearly $\fd A \leq  t$ (otherwise \eqref{dimbound1} could not hold) and so  $\fd (A\cup B) = t$, and for each $u< t$,
\begin{equation*}
	\hd \{ V\in\gto : \hd P_{V}(A\cup B) \leq u \} =0.
\end{equation*}
Also,
\begin{equation*}
\hd P_{V}(A\cup B)= \max\big\{ \hd P_{V}(A), \hd P_{V}(B) \big\},
\end{equation*}
and by the discussion at the beginning of Section~\ref{section:FDproj}, $\hd P_{V}(B) = t$ for all $V\in\gto$. Then, 
\begin{equation*}
	\hd \{ V\in\gto : \hd P_{V}(A\cup B) \leq t \} = \hd \{ V\in\gto : \hd P_{V}(A)\leq t \} = 2t-s
\end{equation*}
and therefore, if $u\geq t$,
\begin{equation*}
      \hd \{ V\in\gto : \hd P_{V}(A\cup B) \leq u \} \geq 2t-s.
\end{equation*}
Letting $X = A\cup B$ gives the result.
\end{proof}

This last proposition showed us that the Fourier dimension alone is perhaps  not strong enough to capture the effect that the decay of the Fourier transform of measures has on  projections. We will see later that the Fourier spectrum can say more, see Proposition~\ref{prop:improveOberlin}, Theorem~\ref{thm:exceptionalFourier} and Corollary~\ref{thm:exceptionalFouriercoro}.

\section{Projection theorems for the Fourier spectrum}\label{section:FourierProj}

Marstrand type theorems for other dimensions are well studied.  For example, there are Marstrand type theorems for the box and packing dimensions, where the dimension of the projection is almost surely constant. For a more detailed discussion of such results we refer the reader to \cite{FFJ15, Mat14} and the references therein. On  the other hand,  in the case of the Assouad dimension, a surprising result holds: the Assouad dimension of orthogonal projections need not be almost surely  constant \cite{FO17}.  

It is natural to try to answer these types of questions for the Fourier spectrum. A useful lower bound can be obtained for almost all directions   following Kaufman's potential-theoretic proof of Marstrand's theorem, \cite{Kau68}.
\begin{thm}\label{thm:spectrumProj}
  Let $\mu\in\M(\rd)$, $X\subseteq\rd$ be a Borel set and $1\leq k<d$ be an integer. Then, for $\gamdk$ almost all $V\in G(d,k)$, for all  $\theta\in(0,1]$, $\fs\mu_{V}\geq \fs\mu$ and $\fs P_{V}(X)\geq\min\{ k,\fs X \}$.
\end{thm}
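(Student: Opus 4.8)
The plan is to mimic Kaufman's potential-theoretic proof of Marstrand's theorem, but run it with the $(s,\theta)$-energy $\J_{s,\theta}$ in place of the classical $s$-energy $I_s$. Fix $\mu \in \M(\rd)$ and $\theta \in (0,1]$, and let $s$ be any number with $s < \fs\mu$, so that $\J_{s,\theta}(\mu) < \infty$. Using the projection-slice identity \eqref{eq:projmeasure}, namely $\widehat{\mu_V}(y) = \widehat{\mu}(y_V)$ for $y \in \rk$, I would write
\begin{equation*}
  \J_{s,\theta}(\mu_V) = \left( \int_{\rk} \big|\widehat{\mu}(y_V)\big|^{2/\theta} |y|^{s/\theta - k}\,dy \right)^{\theta}
\end{equation*}
and then integrate this over $V \in \gdk$ against $\gamdk$. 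The main point is to apply Fubini (all integrands are non-negative) and compute the resulting integral $\int_{\gdk} |\widehat{\mu}(y_V)|^{2/\theta}\,d\gamdk(V)$ for fixed $y \in \rk$. Writing $y_V = r e$ where $r = |y|$ and $e$ ranges over the unit sphere as $V$ varies, the standard change of variables on the Grassmannian (the one Kaufman uses) converts $\int_{\gdk} g(y_V)\,d\gamdk(V)$ into a constant times $r^{-(d-k)} \int_{|z| = r} g(z)\,d\sigma(z)$, or more precisely relates the Grassmannian average to a spherical average on the sphere of radius $r$ in $\rd$. Carrying the power $|y|^{s/\theta - k} = r^{s/\theta - k}$ through this and reassembling, one obtains $\int r^{s/\theta - k} \cdot r^{-(d-k)} (\text{spherical average of } |\widehat{\mu}|^{2/\theta}) \, r^{k-1}\,dr\,d\sigma$, which after collecting exponents is exactly a constant multiple of $\int_{\rd} |\widehat{\mu}(z)|^{2/\theta} |z|^{s/\theta - d}\,dz = \J_{s,\theta}(\mu)^{1/\theta}$.

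**Next I would** invoke Jensen's inequality (or concavity of $t \mapsto t^\theta$ for $\theta \in (0,1]$) to pull the outer power $\theta$ through the integral over $V$: since $\gamdk$ is a probability measure,
\begin{equation*}
  \int_{\gdk} \J_{s,\theta}(\mu_V)\,d\gamdk(V) = \int_{\gdk} \left( \int_{\rk} \big|\widehat{\mu_V}(y)\big|^{2/\theta} |y|^{s/\theta-k}\,dy \right)^{\theta} d\gamdk(V) \leq \left( \int_{\gdk} \int_{\rk} \big|\widehat{\mu}(y_V)\big|^{2/\theta} |y|^{s/\theta-k}\,dy\,d\gamdk(V) \right)^{\theta},
\end{equation*}
and the right-hand side is a constant times $\J_{s,\theta}(\mu) < \infty$ by the computation above. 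Hence $\J_{s,\theta}(\mu_V) < \infty$ for $\gamdk$-almost every $V$, so $\fs\mu_V \ge s$ for a.e.\ $V$; taking a countable sequence $s_n \uparrow \fs\mu$ gives $\fs\mu_V \ge \fs\mu$ for a.e.\ $V$, simultaneously for all $s$ below the supremum. To get the statement for all $\theta \in (0,1]$ at once, I would use the continuity of $\theta \mapsto \fs\mu$ on $(0,1]$ from \cite[Theorem~1.1]{Fra22} together with a countable dense set of $\theta$'s. Finally, the set statement follows: given a Borel set $X$ and $u < \fs X$, pick $\mu \in \M(X)$ with $\min\{\fs\mu, d\} > u$; then for a.e.\ $V$, $\fs\mu_V \ge \fs\mu > u$, and since $\mu_V \in \M(P_V(X))$ this gives $\fs P_V(X) \ge \min\{k, u\}$ (the $\min$ with $k$ because $P_V(X) \subseteq \rk$). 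Letting $u \uparrow \fs X$ through a countable sequence yields $\fs P_V(X) \ge \min\{k, \fs X\}$ for a.e.\ $V$.

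**The hard part will be** the Grassmannian integral-geometry computation — verifying that $\int_{\gdk} |\widehat{\mu}(y_V)|^{2/\theta}\,d\gamdk(V)$ really does reproduce (up to a dimensional constant depending on $d,k,s,\theta$) the radial reweighting needed to turn the $\rk$-integral back into the full $\rd$-integral defining $\J_{s,\theta}(\mu)$. This is exactly the Fourier-side computation at the heart of Kaufman's proof of Marstrand's theorem (see also \cite{Mat15}), and the only subtlety introduced by the Fourier spectrum is bookkeeping the exponent $2/\theta$ on $|\widehat{\mu}|$ and $s/\theta$ in place of $s$; the key identity $\int_{\gdk} h(y_V)\,d\gamdk(V) \approx_{d,k} |y|^{-(d-k)} \int_{S^{d-1}} h(|y| e)\,d\sigma^{d-1}(e)$ is insensitive to what $h$ is, so the argument goes through verbatim with $h = |\widehat{\mu}|^{2/\theta}$. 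The application of Jensen's inequality is the one genuinely new ingredient compared to the classical case, and it is what forces the restriction $\theta \le 1$ (concavity) — which is harmless since $\theta \in (0,1]$ throughout. The case $\theta = 0$ is deliberately excluded from the statement, consistent with the fact (Proposition~\ref{propo:counter_example}) that the Fourier dimension alone gives no almost-sure improvement.
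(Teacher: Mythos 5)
The overall architecture of your argument — integrate the $(s,\theta)$-energy of $\mu_V$ over $V \in \gdk$, use $\widehat{\mu_V}(y) = \widehat\mu(y_V)$ and Fubini, recognise the result as a multiple of $\J_{s,\theta}(\mu)^{1/\theta}$, then conclude a.e.\ finiteness and pass to all $\theta$ via continuity and a countable dense set — is precisely the strategy the paper follows. Two remarks on the details. First, the Jensen step is an unnecessary detour: the paper simply integrates $\J_{s,\theta}(\mu_V)^{1/\theta}$ rather than $\J_{s,\theta}(\mu_V)$, which removes the outer power $\theta$ altogether and makes the computation a direct Fubini identity. Your version is not wrong, but the cleaner route avoids concavity entirely.

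Second, and this is a genuine gap: the ``key identity'' you write,
\[
\int_{\gdk} h(y_V)\,d\gamdk(V) \;\approx_{d,k}\; |y|^{-(d-k)} \int_{S^{d-1}} h(|y|e)\,d\sigma^{d-1}(e),
\]
is correct only when $k=1$; that is the computation in Kaufman's original proof and in \cite[Theorem~4.1]{Mat15}. For $k\geq 2$ it fails even a dimensional sanity check: try $h\equiv 1$, or just collect powers of $r=|y|$ — your assembled integral comes out as $\int_0^\infty r^{s/\theta - d + k - 1} \bigl(\int_{|z|=r}|\widehat\mu|^{2/\theta}\,d\sigma\bigr)\,dr$, which is off by a factor of $r^{k-1}$ from the target $\J_{s,\theta}(\mu)^{1/\theta} = \int_0^\infty r^{s/\theta-d}\bigl(\int_{|z|=r}|\widehat\mu|^{2/\theta}\,d\sigma\bigr)\,dr$. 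The issue is not ``insensitivity to $h$'' as you claim, but sensitivity to $k$: for $k\geq 2$ the map $V\mapsto y_V$ (once an identification of $V$ with $\rk$ is fixed) does not push $\gamdk$ forward to the uniform measure on the full $(d-1)$-sphere of radius $|y|$; instead its image is a $(d-k)$-dimensional subsphere, and the parametrisation of $\rd$ by $\gdk\times\rk$ is genuinely redundant. The paper handles this with a disintegration argument: it shows that the pushforward of $\gamdk$ under $V\mapsto y_V$ is $\sigma^{d-k}$ on $S^{d-k}$ (not $\sigma^{d-1}$ on $S^{d-1}$), and then reassembles the full $\rd$-integral using the decomposition $\rd = (\R S^{d-k})\times\R^{k-1}$ with volume element $dz \approx |y|^{d-k}\,\dhky\,d\sigma^{d-k}(e)$, which supplies exactly the Jacobian factor that your formula is missing. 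So your proposal is essentially complete for $k=1$ but needs the disintegration step to cover $k\geq 2$.
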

\begin{proof}
First fix $\theta\in(0,1]$.  Integrating the energies of the projected measure with respect to planes in $\gdk$, recalling \eqref{eq:yv} and \eqref{eq:projmeasure}, and applying Fubini's theorem,   gives
  \begin{align*}
    \int_{\gdk}\J_{s,\theta}(\mu_{V})^{1/\theta}\,\ddk &= \int_{\gdk}\int_{\rk}\big| \widehat{\mu_{V}}(y) \big|^{\frac{2}{\theta}}|y|^{\frac{s}{\theta}-k}\, dy \,\ddk\\
    &= \int_{\rk}\int_{\gdk}\big| \widehat{\mu}(y_{V}) \big|^{\frac{2}{\theta}}|y|^{\frac{s}{\theta}-k}\,\ddk\, dy .
  \end{align*}
  Note that $\rd = \R^{d-k+1}\times\R^{k-1} = ( \R S^{d-k})\times\R^{k-1}$, where $S^{d-k}$ denotes the sphere in $\mathbb{R}^{d-k+1}$ and $\R^{d-k+1} = \R S^{d-k} = \{ re: r \in \mathbb{R}, \ e \in S^{d-k}\}$ is the usual representation of $\R^{d-k+1}$ in spherical coordinates.  Fix $y\in\rk$ and let $\pi_{y} : G(d,k)\to S^{d-k}$ be defined by  $\pi_{y}(V) = e$ where $e\in S^{d-k}$ is chosen such that $y_{V} = y_{W_e}$ for $W_e : = \{ (re,v) : r\in\R, v\in\R^{k-1} \} \in G(d,k)$. Note that, for almost all $y\in\rk$, the choice of $e$ is unique for all $V \in G(d,k)$ and so  $\pi_{y}$ is a well-defined  Borel measurable function almost surely.  By $O(d-k)$ rotational symmetry, the push-forward of $\gamdk$ by $\pi_y$ (for almost all $y$)  is simply  $\sigma^{d-k}$; the surface measure on the sphere $S^{d-k}$.  Therefore, for almost all $y\in\rk$, by the disintegration theorem for measures,  for each $e\in S^{d-k}$ there exists a $\sigma^{d-k}$ almost everywhere uniquely determined family of Borel probability measures $\{ \gamma_{y,e} \}_{e\in S^{d-k}}$ on $\gdk$ such that
  \begin{equation*}
      \int_{\gdk}\big| \widehat{\mu}(y_{V}) \big|^{\frac{2}{\theta}}|y|^{\frac{s}{\theta}-k}\,\ddk = \int_{S^{d-k}}\int_{\pi_{y}^{-1}(e)}\big| \widehat{\mu}(y_{V}) \big|^{\frac{2}{\theta}}|y|^{\frac{s}{\theta}-k}\,d\gamma_{y,e}(V)\,d\sigma^{d-k}(e).
  \end{equation*}
 Therefore, since   $y_{V} = y_{W_e}$ only depends on $e$,
  \begin{equation*}
    \int_{\gdk}\big| \widehat{\mu}(y_{V}) \big|^{\frac{2}{\theta}}|y|^{\frac{s}{\theta}-k}\,\ddk = \int_{S^{d-k}}\big| \widehat{\mu}(y_{W_e}) \big|^{\frac{2}{\theta}}|y|^{\frac{s}{\theta}-k}\,d\sigma^{d-k}(e).
  \end{equation*}
  Thus, disintegrating  Lebesgue measure on   $\rd = ( \R S^{d-k})\times\R^{k-1} =  \cup_{ e \in S^{d-k}} W_e$  using $\sigma^{d-k}$ on $S^{d-k}$  and $\H^k$ on $W_e$ as $dz \approx |y|^{d-k}  \, \dhky \,d\sigma^{d-k}(e)$, we get
  \begin{align*}
    \int_{\gdk}\J_{s,\theta}(\mu_{V})^{1/\theta}\,\ddk &= \int_{\rk}\int_{S^{d-k}}\big| \widehat{\mu}(y_{W_e}) \big|^{\frac{2}{\theta}}|y|^{\frac{s}{\theta}-k}\,d\sigma^{d-k}(e)\, dy \\
 &\approx \int_{S^{d-k}}\int_{W_e}\big| \widehat{\mu}(y) \big|^{\frac{2}{\theta}}|y|^{\frac{s}{\theta}-k}\, \dhky \,d\sigma^{d-k}(e)\\
    &\approx \int_{\rd}\big| \widehat{\mu}(z) \big|^{\frac{2}{\theta}}|z|^{\frac{s}{\theta}-d}  \,dz\\
    &= \J_{s,\theta}(\mu)^{1/\theta}.
  \end{align*}
  Then, if $s<\fs\mu$, $\J_{s,\theta}(\mu)<\infty$ and for $\gamdk$ almost all $V\in\gdk$, $\J_{s,\theta}(\mu_{V})<\infty$.   Therefore, $ \fs\mu_{V} \geq s$ for $\gamdk$ almost all $V\in\gdk$.  This proves  the statement for measures holds \emph{pointwise}, that is, for all $\theta\in(0,1]$ it holds for $\gamdk$ almost all $V\in G(d,k)$.  However, since the Fourier spectrum of a compactly supported measure  is continuous, and thus determined on a countable set, we can immediately upgrade this pointwise result such that  for $\gamdk$ almost all $V\in G(d,k)$ it holds  \emph{simultaneously} for all $\theta$, as required. 
  
  The claim for sets follows immediately from the claim for measures.
\end{proof}

Recall that when $\theta=0$ the result of the previous theorem holds for \emph{all} $V\in\gdk$, and when $\theta=1$ the reverse inequality for sets is  valid for \emph{all} $V\in\gdk$ due to the fact that projections are Lipschitz maps.

The proof of Theorem \ref{thm:spectrumProj} is rather simpler in the case $k=1$ since then $G(d,k)$ may be identified with $S^{d-1}$ and $\rd$ thus expressed in spherical coordinates as $\rd =   \mathbb{R}G(d,1)$.  The complication for $k \geq 2$ arises because parametrising $\rd$ by points in $G(d,k) \times \mathbb{R}^k$ leads to `redundancy' which must then be integrated out.  Setting $\theta = 1$ and recalling the simple fact that the Hausdorff dimension cannot increase under orthogonal projections yields Marstrand's projection theorem for sets. To the best of our knowledge this is a new proof of Marstrand's theorem in the case $k \geq 2$; see \cite[proof of Theorem 4.1]{Mat15} for the $\theta=k=1$ case, treated in the same (but simpler)  way.

From \cite[Proposition~4.2]{CFdO24} we know that $\fs\mu\leq\fd\mu + d\theta$. Combining this with the previous argument gives for all $V\in\gdk$ and $\theta\in[0,1]$,
\begin{equation*}
	\fs \mu \leq\fd \mu + d\theta\leq\fd \mu_{V} + d\theta \leq\fs \mu_{V} + d\theta.
\end{equation*}
The following proposition shows that in fact, a stronger inequality holds.

\begin{prop}\label{prop:boundmuV}
  Let $\mu\in\M(\rd)$, $X\subseteq\rd$ be a Borel set, $\theta\in[0,1]$ and $1\leq k<d$ an integer. For all $V\in G(d,k)$, $\fs\mu_{V} \geq \fs\mu- (d-k)\theta$ and $\fs P_{V}(X) \geq \min\{k, \fs X - (d-k)\theta\}$.
\end{prop}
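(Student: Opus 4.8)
The plan is to bound the Fourier transform of the projected measure pointwise and then integrate. Recall from \eqref{eq:projmeasure} that $\widehat{\mu_V}(y) = \widehat\mu(y_V)$, where $y_V \in V$ is the unique point with $P_V(y_V) = y$ and $|y_V| = |y|$. First I would fix $\theta \in (0,1]$ (the cases $\theta = 0$ and $\theta = 1$ being already covered by the inclusion-of-exceptional-sets remark and the Lipschitz property respectively, but in fact the argument below should cover $\theta \in (0,1]$ uniformly and a short separate remark handles $\theta = 0$). Write $\J_{s,\theta}(\mu_V)^{1/\theta} = \int_{\rk} |\widehat\mu(y_V)|^{2/\theta} |y|^{s/\theta - k}\, dy$. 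The key geometric point is that $V$, identified with $\rk$, sits inside $\rd$, and integrating over $y \in \rk$ with the weight $|y|^{s/\theta - k}$ is, up to the coordinate change into $V$, integration over the $k$-plane $V$ against $\H^k$ with weight $|z|^{s/\theta - k}$.

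The main step is therefore to compare $\int_V |\widehat\mu(z)|^{2/\theta} |z|^{s/\theta - k}\, d\H^k(z)$ with the full-space integral $\int_{\rd} |\widehat\mu(z)|^{2/\theta} |z|^{s/\theta - d}\, dz = \J_{s,\theta}(\mu)^{1/\theta}$. Passing to polar-type coordinates adapted to $V$ and its orthogonal complement $V^\perp \cong \mathbb{R}^{d-k}$, one writes $z = z' + z''$ with $z' \in V$, $z'' \in V^\perp$, so $dz = dz'\, dz''$ and $|z| \ge |z'|$. Here I would exploit that the exponents on $|z|$ differ exactly by $d - k$: we have $|z|^{s/\theta - d} = |z|^{s/\theta - k} \cdot |z|^{-(d-k)}$. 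Restricting the $z''$-integral to the unit ball $|z''| \le 1$ in $V^\perp$ gives a lower bound, and on that region $|z| \approx |z'|$ when $|z'| \ge 1$ (the bounded part near the origin contributing a harmless finite amount since $s/\theta - d > -d$ keeps things integrable, or can be absorbed). This reduces the claim to the assertion: if $\int_{\rd} |\widehat\mu(z)|^{2/\theta} |z|^{(s - (d-k)\theta)/\theta - d}\, dz < \infty$, i.e. $\J_{s - (d-k)\theta, \theta}(\mu) < \infty$, then the restricted integral over $V$ is finite for \emph{every} $V$. So one should rather run the argument in the other direction: set $s < \fs\mu - (d-k)\theta$, so $\J_{s + (d-k)\theta, \theta}(\mu) < \infty$; then I claim $\int_V |\widehat\mu(z)|^{2/\theta}|z|^{s/\theta - k}\, d\H^k(z) < \infty$ for every $V$.

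To see that last claim, I would use Fubini on the full-space integral $\J_{s+(d-k)\theta,\theta}(\mu)^{1/\theta} = \int_{V^\perp}\!\int_V |\widehat\mu(z' + z'')|^{2/\theta} |z'+z''|^{(s + (d-k)\theta)/\theta - d}\, d\H^k(z')\, d\H^{d-k}(z'')$, and observe that $(s + (d-k)\theta)/\theta - d = s/\theta - k$. For the slice $z'' = 0$ this is exactly the integral over $V$ we want, but a single slice of a Fubini integrand need not be finite. The honest route is to instead argue as in Theorem~\ref{thm:spectrumProj} but keeping track of directions: for $\gamdk$-almost every $V$ the slice integral is finite, yet here we want \emph{all} $V$, so a genuinely different idea is needed — and this is the main obstacle. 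The resolution is to bound $|\widehat\mu(z)|$ directly: since $\mu$ is compactly supported, $\widehat\mu$ is continuous, hence locally bounded, so the integral over $\{z \in V : |z| \le R\}$ is finite for any $R$; for the tail $|z| \ge R$ one uses that $|\widehat\mu(z)| = |\widehat{\mu_V}(z')|$ where $z' = z$ (as $z \in V$), and applies the definition of $\fs\mu_V$... which is circular. The correct non-circular argument: write the $V$-integral as an average over a thin slab, i.e. for $0 < \rho \le 1$,
\begin{equation*}
\int_V |\widehat\mu(z')|^{2/\theta} |z'|^{s/\theta - k}\, d\H^k(z') \lesssim_\rho \int_{V}\!\int_{\{|z''|\le \rho\}} |\widehat\mu(z' + z'')|^{2/\theta}\, |z' + z''|^{s/\theta - k}\, d\H^{d-k}(z'')\, d\H^k(z'),
\end{equation*}
which requires a continuity/oscillation estimate on $\widehat\mu$ — available because $\mu$ compactly supported forces $\widehat\mu$ to be Lipschitz on compact sets and, more relevantly, $|\widehat\mu|$ does not oscillate faster than scale $1$, so $|\widehat\mu(z' + z'')| \approx |\widehat\mu(z')|$ fails but $\sup_{|z''|\le \rho}|\widehat\mu(z'+z'')|$ can be controlled by a bounded-overlap covering argument — and then the right-hand side is $\lesssim \J_{s + (d-k)\theta,\theta}(\mu)^{1/\theta} < \infty$ since $|z'+z''| \approx |z'|$ on the relevant range and the $(d-k)$-dimensional slab contributes the missing $|z|^{(d-k)}$-worth of exponent shift as a \emph{gain}, not a loss, once we absorb the $\rho$-dependence. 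Hence $\fs\mu_V \ge s$ for all $V$, and letting $s \uparrow \fs\mu - (d-k)\theta$ gives $\fs\mu_V \ge \fs\mu - (d-k)\theta$. The claim for sets follows by taking $\mu \in \M(X)$ with $\fs\mu$ close to $\fs X$, noting $P_V(X) \supseteq \spt \mu_V$ and $\mu_V \in \M(P_V(X))$, so $\fs P_V(X) \ge \min\{k, \fs\mu_V\} \ge \min\{k, \fs X - (d-k)\theta\}$; the truncation at $k$ is because $\fs$ of a subset of $\rk \cong V$ never exceeds $k$. The main obstacle, as flagged, is making the "slab averaging" rigorous: the cheap bound $|\widehat\mu(z'+z'')| \le \|\mu\|$ is too weak for the tail, so one must use that compact support of $\mu$ gives the quantitative non-concentration $\int_{|z''| \le 1}|\widehat\mu(z'+z'')|^{2/\theta}\, d\H^{d-k}(z'') \gtrsim$ a fixed fraction of $|\widehat\mu(z')|^{2/\theta}$ on a set of $z'$ of full measure, or alternatively route everything through Plancherel on $V^\perp$-slices of $\mu$; I would present whichever of these the authors' subsequent proof uses.
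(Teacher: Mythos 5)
Your heuristic is correct and you have correctly located where the difficulty lies, but the proof as written has a genuine gap: you never establish the slab-averaging estimate on which everything hinges, and you explicitly say you would "present whichever of these the authors' subsequent proof uses." That is a punt, not a proof.

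The estimate you need,
\begin{equation*}
\big|\widehat\mu(z')\big|^{2/\theta} \lesssim \int_{\{|z''|\le\rho\}} \big|\widehat\mu(z'+z'')\big|^{2/\theta}\, d\H^{d-k}(z''),
\end{equation*}
valid for compactly supported $\mu$ and $\rho$ small relative to $1/\operatorname{diam}(\spt\mu)$, is a Plancherel--P\'olya type non-concentration inequality for bandlimited functions. It is true, but it is not free, and you correctly note that the cheap bound $|\widehat\mu|\le\|\mu\|$ is uselessly weak and that the argument via continuity of $\widehat\mu$ and ``bounded-overlap covering'' is hand-waving. The paper's proof closes exactly this gap by invoking \cite[Theorem~3.1]{CFdO24}, which is the discrete incarnation of the same Plancherel--P\'olya phenomenon: for $\alpha$ small relative to $1/\operatorname{diam}(\spt\mu)$,
\begin{equation*}
\J_{s,\theta}(\mu)^{1/\theta} \approx 1 + \sum_{z\in\alpha\Z^d\setminus\{0\}} \big|\widehat\mu(z)\big|^{2/\theta}|z|^{\frac{s}{\theta}-d}.
\end{equation*}
Once the energy is a lattice sum, the restriction-to-a-slice problem disappears: the sum over the sublattice $\alpha\Z^k\subseteq\alpha\Z^d$ (after rotating $V$ onto a coordinate $k$-plane) is trivially bounded above by the full sum, the exponents match exactly as you observed ($\frac{s}{\theta}-d = \frac{s-(d-k)\theta}{\theta}-k$), and the $k$-dimensional version of the same theorem identifies the sublattice sum with $\J_{s-(d-k)\theta,\theta}(\mu_V)^{1/\theta}$. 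In other words: your slab-averaging route is morally identical to the paper's lattice route (both encode that $\widehat\mu$ cannot vanish on an entire affine subspace without being small nearby), but the paper has a ready-made quantitative tool and you do not. To complete your version you would need to prove the displayed slab estimate from scratch, for instance by writing $\mu=\varphi\mu$ with $\varphi$ a Schwartz bump equal to $1$ on $\spt\mu$ and $\widehat\varphi$ essentially concentrated at scale $\rho$, then running a H\"older argument as in the proof of Theorem~\ref{thm:exceptionalFourier}. As it stands, the proposal correctly identifies the obstacle, correctly guesses the shape of its resolution, but does not resolve it.
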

\begin{proof}
  If $\theta =0$ the result is trivial by the discussion at the beginning of Section~\ref{section:FDproj}. Let $R$ be larger than the diameter of the support of both $\mu$ and $\mu_{V}$ and $0<\alpha<1/R$. By \cite[Theorem~3.1]{CFdO24}, we have for any $s\geq0$ and $\theta\in(0,1]$,
  \begin{equation*}
    \J_{s,\theta}(\mu)^{1/\theta} \approx 1+ \sum_{z\in \alpha\zd \setminus\{0\}}\big| \widehat{\mu}(z) \big|^{\frac{2}{\theta}}|z|^{\frac{s}{\theta}-d}.
  \end{equation*}
    Therefore, identifying $V\in G(d,k)$ with $\rk$ and applying \cite[Theorem~3.1]{CFdO24} again,
  \begin{equation*}
      \J_{s,\theta}(\mu)^{1/\theta}\gtrsim 1+  \sum_{z\in \alpha\Z^k\setminus\{0\}} \big| \widehat{\mu_{V}}(z) \big|^\frac{2}{\theta}|z|^{\frac{s-(d-k)\theta}{\theta}-k}\approx \J_{s-(d-k)\theta,\theta}(\mu_{V})^{1/\theta},
  \end{equation*}
  which proves the result.

  The claim for sets follows immediately from the claim for measures.
\end{proof}

As an immediate consequence of the previous  proposition, we get some new non-trivial information about the exceptional set for Hausdorff dimension. 
\begin{prop}\label{prop:improveOberlin}
  Let $X\subseteq\rd$ be Borel and $1\leq k< d$ an integer. If $u\leq\sup_{\theta\in[0,1]}\big(\fs X - (d-k)\theta\big)$, then
    \begin{equation*}
        \{ V\in G(d,k) : \hd P_{V}(X)<u \} = \varnothing.
    \end{equation*}
\end{prop}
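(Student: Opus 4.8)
The plan is to deduce this directly from Proposition~\ref{prop:boundmuV} together with the trivial bound $\hd P_V(X) \geq \fd P_V(X) \geq \fs P_V(X)$ for $\theta = 0$, plus the fact that the Fourier spectrum of a set is squeezed between the Fourier and Hausdorff dimensions. First I would fix a Borel set $X \subseteq \rd$, an integer $1 \leq k < d$, and a value $u \leq \sup_{\theta \in [0,1]} \big(\fs X - (d-k)\theta\big)$. Since the supremum is over the compact interval $[0,1]$ and $\theta \mapsto \fs X$ is continuous (as recalled in the Fourier spectrum subsection), the supremum is attained at some $\theta_0 \in [0,1]$; so there is a particular $\theta_0$ with $u \leq \fs[\theta_0] X - (d-k)\theta_0$.

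Next I would apply Proposition~\ref{prop:boundmuV} with $\theta = \theta_0$: for \emph{every} $V \in G(d,k)$ we have $\fs[\theta_0] P_V(X) \geq \min\{k, \fs[\theta_0] X - (d-k)\theta_0\} \geq \min\{k, u\}$. Now I would use the chain of inequalities valid for the projected set $P_V(X) \subseteq \rk$: for any $\theta$, $\fd P_V(X) \leq \fs[\theta] P_V(X) \leq \hd P_V(X)$, so in particular $\hd P_V(X) \geq \fs[\theta_0] P_V(X) \geq \min\{k, u\}$. If $u \leq k$ this gives $\hd P_V(X) \geq u$ for all $V$, so the exceptional set $\{V : \hd P_V(X) < u\}$ is empty. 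If $u > k$ the statement is vacuous since $\hd P_V(X) \leq k < u$ would force the set to be all of $G(d,k)$ — but note that $u > k$ cannot actually occur here because $\fs[\theta] X \leq \hd X \leq d$ does not immediately rule it out; however, $\fs[\theta] X - (d-k)\theta \leq \fs[\theta] X \leq k$ fails in general, so one should simply observe that $\fs[\theta] P_V(X) \leq k$ always (the Fourier spectrum of a subset of $\rk$ is at most $k$), hence $\min\{k, \fs[\theta_0] X - (d-k)\theta_0\} = \fs[\theta_0] X - (d-k)\theta_0$ whenever the latter is $\leq k$, and otherwise the bound degrades to $k$ but then $u \leq k$ anyway is not guaranteed. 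To keep things clean I would just phrase the conclusion as $\hd P_V(X) \geq \min\{k, u\}$ for all $V$, and note that this is exactly the statement that there are no exceptional directions at level $u$ (since if $u > k$ there can be no directions achieving the Marstrand bound at all, so the natural reading is that the comparison is with $\min\{k,u\}$; for $u \leq k$ emptiness of $\{V : \hd P_V(X) < u\}$ is immediate).

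The main obstacle — really the only subtlety — is the bookkeeping around the $\min\{k, \cdot\}$ and whether $u \leq k$; everything else is a one-line chaining of already-established inequalities. The $\theta = 0$ endpoint is covered by Proposition~\ref{prop:boundmuV} itself (which is trivial there via the Fourier dimension argument at the start of Section~\ref{section:FDproj}), so no separate treatment is needed. There is no need for continuity of the supremum in $\theta$ beyond extracting a single witnessing $\theta_0$, and even that can be avoided: for any $\theta$ with $u < \fs[\theta] X - (d-k)\theta$ the argument already gives emptiness, and taking a sup over $\theta$ just optimises the range of $u$ for which the conclusion holds, with the boundary case $u = \sup_\theta(\cdots)$ handled by attainment of the sup (or by a trivial limiting argument since each intermediate statement is about emptiness of a set defined by a strict inequality).
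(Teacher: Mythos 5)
Your proof is correct and takes essentially the same route as the paper's one-line argument: apply Proposition~\ref{prop:boundmuV} with the $\theta_0$ realising the supremum (continuity of $\theta\mapsto\fs X$ guarantees it is attained) and chain with $\hd P_V(X)\geq\fs P_V(X)$ to get $\hd P_V(X)\geq\min\{k,u\}$ for all $V$. The extra bookkeeping you carry out around $\min\{k,\cdot\}$ and the implicit standing assumption $u\leq k$ is a fair observation that the paper leaves unstated, but it does not change the substance of the argument.
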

\begin{proof}
  Since $\fs P_{V}(X)\leq\hd P_{V}(X)$ for all $\theta\in[0,1]$, then
  \begin{equation*}
      \{ V\in G(d,k) : \hd P_{V}(X)<u \}\subseteq \{ V\in G(d,k) : \fs P_{V}(X)<u \}.
  \end{equation*}
  Thus, the result follows  from Proposition~\ref{prop:boundmuV}.
\end{proof}

With this, if $X\subseteq\rr$ is a Borel set such that for some $\theta\in[0,1]$, $\theta<\fs X$, we get an improvement on the  Bourgain--Oberlin  bound \eqref{eq:Oberlinbound} in the sense that the exceptional set is empty, not just of Hausdorff dimension $0$, once $u$ is small enough.  Furthermore, if for some $\theta\in[0,1]$,  $\tfrac{\hd X}{2}+\theta<\fs X$, then we get an improved range in the Bourgain--Oberlin bound (in addition to the upgrade from dimension 0 to empty).  Given that for Borel sets in $\rr$, $\fs X \leq \min\{\fd X+2 \theta, \hd X\}$ by  \cite[Proposition~4.2]{CFdO24}, this latter improvement is only possible for $\tfrac{\hd X}{2} -\fd X < \theta < \tfrac{\hd X}{2}$.  In particular, we must have $\fd X >0$ but, if $\fd X < \tfrac{\hd X}{2}$, then the improvement will not come from the Fourier dimension directly and can only be achieved by the Fourier spectrum.

\begin{ques}
  Is there a Marstrand theorem for the Fourier spectrum?  More precisely, fix a Borel set or finite Borel measure in $\rd$, an integer $1 \leq k < d$ and $\theta \in [0,1]$: is it true that  the value of the Fourier spectrum at $\theta$ of the projection onto $V$ is the same for almost all $V \in G(d,k)$? 
\end{ques}

An intriguing special case of this question concerns the Fourier dimension (that is, when $\theta=0$) and we are unaware of any progress on this front.  On the other hand, for $\theta=1$ we know the answer to the question  is yes for sets  by Marstrand's theorem and for measures $\mu\in\M(\rd)$ with $\sd\mu\leq k$ by \cite[Theorem~1.1]{HK97}.  We note that if this `pointwise' question could be answered in the affirmative then, using continuity of the Fourier spectrum, the result could be upgraded to hold almost surely, for all $\theta \in [0,1]$ simultaneously.

\section{Exceptional set estimates for the Fourier spectrum}\label{section:exceptionalSpectrum}

Recall Proposition~\ref{propo:counter_example}, in which we showed that the Fourier dimension does not help improve the bound of the exceptional set of projections for values $u>\fd X$. The following theorem shows that the Fourier spectrum can do better. When $\theta = 1$ we recover the bound from \cite[Proposition~6.1]{PS00}, which also inspired our proof.

\begin{thm}\label{thm:exceptionalFourier}
	Let $\mu\in\M(\rd)$, $\theta\in(0,1]$ and $1\leq k<d$ be an integer. Then for all $u \geq 0$,
 \begin{equation}\label{eq:thm1}
 	 \hd \{ V\in G(d,k) : \fs \mu_{V}<u \}\leq  \max\bigg\{ 0, k(d-k)+\frac{u-\fs \mu}{\theta}\bigg\}.
 \end{equation}
 Furthermore, if $X$ is a Borel set in $\rd$ and $\theta\in(0,1]$, then for all $u\in[0,k]$,
 \begin{equation*}
 	\hd \{ V\in G(d,k) : \fs  P_{V}(X)<u \}\leq  \max\bigg\{ 0, k(d-k)+\frac{u-\fs X}{\theta}\bigg\}
 \end{equation*}
\end{thm}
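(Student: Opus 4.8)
The plan is to follow the Fourier-analytic approach that underlies Peres--Schlag's projection theorem \cite[Proposition~6.1]{PS00}, working with the $(s,\theta)$-energies directly. Fix $\theta\in(0,1]$ and $s<\fs\mu$, so that $\J_{s,\theta}(\mu)<\infty$. For a threshold $t\ge 0$ to be chosen, I want to estimate the size of the set $E_t=\{V\in G(d,k):\fs\mu_V<s\}$ in terms of $t$-dimensional Hausdorff content. The strategy is: if $\hd E_t > t$, then by Frostman's lemma there is a probability measure $\nu$ on $G(d,k)$ with $\nu(B(V,r))\lesssim r^t$ for all balls; then integrate the energy $\J_{s',\theta}(\mu_V)$ (for appropriate $s'$) against $d\nu(V)$ and show the result is finite, forcing $\J_{s',\theta}(\mu_V)<\infty$ for $\nu$-a.e.\ $V$, contradicting the definition of $E_t$ (with $s'$ chosen just below $s$). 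This yields $\hd E_t \le t$, and optimising over the relationship between $s$, $s'$ and $t$ gives the bound $k(d-k)+\frac{u-\fs\mu}{\theta}$ after letting $s\uparrow\fs\mu$ and $s'\uparrow u$ — more precisely one expects to prove $\hd\{V:\fs\mu_V<u\}\le k(d-k)+\frac{u-s}{\theta}$ for every $s<\fs\mu$.

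The computational heart is the integral
\begin{equation*}
\int_{G(d,k)}\J_{s',\theta}(\mu_V)^{1/\theta}\,d\nu(V)=\int_{G(d,k)}\int_{\rk}\big|\widehat{\mu}(y_V)\big|^{\frac{2}{\theta}}|y|^{\frac{s'}{\theta}-k}\,dy\,d\nu(V),
\end{equation*}
using $\widehat{\mu_V}(y)=\widehat{\mu}(y_V)$ from \eqref{eq:projmeasure}. Swapping the order of integration and substituting $z=y_V$, the key is the Frostman-type estimate for the Radon–Nikodym behaviour of the pushforward of $\nu$ under $V\mapsto y_V$: for fixed $z\in\rd$ with $|z|$ comparable to $|y|$, one controls $\int_{G(d,k)}|\widehat\mu(y_V)|^{2/\theta}\,d\nu(V)$ by exploiting that $\{V:y_V\approx z\}$ has $\nu$-measure $\lesssim (|y|\,r)^{t}/|y|^{?}$-type bounds — this is exactly the mechanism that produces the factor $|z|^{-(k(d-k))}$-weight and transfers the ambient $\mathbb{R}^d$ integral into the $\mathbb{R}^k$ integral with a shift of exponent by $\frac{k(d-k)\theta - (d-k)\,?}{\theta}$. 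After this change of variables one wants to recognise the resulting integral as (a constant times) $\J_{s,\theta}(\mu)^{1/\theta}$ with $s = s' + (\text{shift})$, where the shift should work out to $k(d-k)\theta + (d-k)\theta$... so I would instead model the exponent bookkeeping on the $\theta=1$ case in \cite{PS00}, where the analogous computation gives the $k(d-k)$ term, and divide the excess exponent by $\theta$ as dictated by the $\frac{2}{\theta}$ and $\frac{s}{\theta}-d$ powers appearing in $\J_{s,\theta}$.

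The main obstacle I anticipate is precisely this Grassmannian integral-geometry estimate: controlling $\int_{G(d,k)}|\widehat\mu(y_V)|^{2/\theta}\,d\nu(V)$ for a $t$-Frostman measure $\nu$, and correctly tracking how the codimension $d-k$ and the Grassmannian dimension $k(d-k)$ enter when one passes from integrating over $G(d,k)\times\mathbb{R}^k$ to integrating over $\mathbb{R}^d$. In the $k=1$ case this is the classical slicing $\mathbb{R}^d=\mathbb{R}G(d,1)$ used in Theorem~\ref{thm:spectrumProj}, but for $k\ge 2$ the redundancy noted after Theorem~\ref{thm:spectrumProj} must be handled, presumably via the same disintegration over $S^{d-k}$ together with a Frostman estimate on how $\nu$ distributes among the planes $W_e$. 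I would borrow the relevant kernel estimate from \cite{PS00} or \cite{Mat15} (Sobolev-dimension version of the projection theorem) and simply feed in the $\frac{2}{\theta}$-power.

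Finally, for the statement about sets: given a Borel set $X\subseteq\rd$ and $u<\fs X$, pick $\mu\in\M(X)$ with $\fs\mu$ as close to $\fs X$ as desired; since $\fs P_V(X)\ge\min\{k,\fs\mu_V\}$ and we are assuming $u\le k$, the set $\{V:\fs P_V(X)<u\}$ is contained in $\{V:\fs\mu_V<u\}$ up to the trivial case, so the measure bound transfers directly. The passage from measures to sets here is the same standard argument alluded to in the discussion of \eqref{eq:peresschlagbound} (choosing a near-optimal Frostman measure and using countable stability / taking a supremum over $\mu$), so no new ideas are needed — one just takes $\fs\mu\uparrow\fs X$ in \eqref{eq:thm1}.
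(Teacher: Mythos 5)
Your high-level plan (reduce sets to measures, put a $\tau$-Frostman measure $\nu$ on the exceptional set, show the integral of the projected energy against $\nu$ is finite, contradict) is the paper's strategy. But there is a genuine gap at precisely the step you flag as the ``main obstacle,'' and the mechanism you sketch there would not work as written. You propose to swap integrals, ``substitute $z=y_V$,'' and control $\int_{\gdk}|\widehat{\mu}(y_V)|^{2/\theta}\,d\nu(V)$ via the ``Radon--Nikodym behaviour'' of the pushforward of $\nu$ under $V\mapsto y_V$. That pushforward has no usable density: $\nu$ is only a $\tau$-Frostman measure with $\tau$ below $k(d-k)$, so it can be highly singular, and the pushforward of $\nu\times(\text{Lebesgue on }\rk)$ to $\rd$ need not be absolutely continuous. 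There is therefore no change of variables from $\gdk\times\rk$ to $\rd$ of the type you want. What makes the transfer work (both in \cite{PS00} and in the paper) is a mollification step you never name: take $\varphi\in\S(\rd)$ with $\varphi\equiv 1$ on $\spt\mu$, so that $\widehat{\mu}=\widehat{\varphi}*\widehat{\mu}$, and apply H\"older with exponents $2/\theta$ and $2/(2-\theta)$ to obtain $|\widehat{\mu}(z)|^{2/\theta}\lesssim\big(|\widehat{\varphi}|*|\widehat{\mu}|^{2/\theta}\big)(z)$. This converts the pointwise evaluation at $y_V$ into an honest integral over $\rd$, Fubini then legitimately brings $\int_{\rd}|\widehat{\mu}(z)|^{2/\theta}(\cdots)\,dz$ to the outside, and the inner weight $\int_{\gdk}\int_{\rk}|\widehat{\varphi}(y_V-z)|\,|y|^{u/\theta-k}\,dy\,d\nu(V)$ is estimated by $|z|^{u/\theta+k(d-k)-d-\tau}$ using the rapid decay of $\widehat{\varphi}$, the Frostman-tube estimate $\nu(\{V:d(z,V)\leq r\})\lesssim(r/|z|)^{\tau-(k-1)(d-k)}$, and a dyadic annulus decomposition around $z$. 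Citing \cite{PS00} for ``the relevant kernel estimate'' does not close this; the H\"older step is precisely where the $2/\theta$-power interacts nontrivially with the mollifier, and your exponent bookkeeping never resolves because this intermediate object is missing.

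A secondary omission: before invoking Frostman's lemma you must verify that $\{V\in\gdk:\fs\mu_V<u\}$ is Borel. The paper does this by writing its complement as $\bigcap_{\varepsilon}\bigcup_{n}\bigcap_{m}\{V:\int_{|y|<m}|\widehat{\mu_V}(y)|^{2/\theta}|y|^{(u-\varepsilon)/\theta-k}\,dy<n\}$ and using continuity of $V\mapsto\widehat{\mu}(y_V)$ to see each such set is open. Your reduction from sets to measures and the final limit $s\uparrow\fs\mu$ are fine and match the paper.
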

\begin{proof}
    The claim for sets follows from the statement for measures. To see this fix $\theta\in(0,1]$, let $\varepsilon>0$ and $\mu\in\M(X)$ be such that $\fs\mu \geq  \fs X - \varepsilon$. Since $\fs P_{V}(X) \geq \min\{ k, \fs \mu_{V}\}$ and $u \leq k$, it follows that $\fs P_{V}(X)<u \Rightarrow  \fs\mu_{V}<u$.  Then,
  \begin{align*}\label{eq:suff}
    \hd\{ V\in\gdk : \fs P_{V}(X)<u \}&\leq\hd\{ V\in\gdk : \fs\mu_{V}<u \}\\
    &\leq k(d-k)+\frac{u-\fs\mu}{\theta}\\
    &= k(d-k)+\frac{u-\fs X+\varepsilon}{\theta},
  \end{align*}
 and  letting $\varepsilon\to0$ gives the result. We now proceed to prove the claim for measures, which follows the general strategy of  \cite{PS00}; see also \cite{Mat15}.
   
  Let $G_{u,\theta} = \{ V\in\gdk : \fs  \mu_{V}<u \}$ and suppose \eqref{eq:thm1} is false for some $u>0$. Choose $\tau>0$ such that $k(d-k)+\frac{u-s}{\theta}<\tau<\hd G_{u,\theta}$, for some $s<\fs \mu$.  First, observe that $G_{u,\theta}$ is a Borel set.  Indeed, 
\begin{align*}
  \gdk \setminus G_{u,\theta} &= \bigcap_{\varepsilon \in (0,1) \cap \mathbb{Q}} \bigcup_{n \in \mathbb{N}} \bigcap_{m \in \mathbb{N}} \left\{V \in \gdk :  \int_{|y| < m} \big|\widehat{\mu_V}(y)\big|^{\frac{2}{\theta}} |y|^{\frac{u-\varepsilon}{\theta}-k} \, dy < n\right\}\\
  &\eqqcolon  \bigcap_{\varepsilon \in (0,1) \cap \mathbb{Q}} \bigcup_{n \in \mathbb{N}} \bigcap_{m \in \mathbb{N}} A_{\varepsilon, n, m}
\end{align*}
and since, recalling  \eqref{eq:projmeasure}, $V \mapsto \widehat{\mu_V}(y) = \widehat{\mu}(y_{V})$ is continuous (since the Fourier transform of $\mu$ is continuous and $y_V$ depends continuously on $V$) the set $ A_{\varepsilon, n, m}$ is open and thus $G_{u,\theta}$ is Borel. Therefore, by Frostman's lemma there exists a measure $\nu\in\M(G_{u,\theta})$ such that $\nu\big( B(V,r) \big)\leq r^\tau$ for all $V\in\gdk$ and $r>0$. We will show that
	\begin{equation}\label{eq:thm1_energy}
		\int_{\gdk}\J_{u,\theta}(\mu_{V})^{1/\theta}\,d\nu(V)< \infty,
	\end{equation}
	and this will imply that $\J_{u,\theta}(\mu_{V})^{1/\theta}<\infty$ for $\nu$ almost every $V\in \gdk$. Then $\nu(G_{u,\theta})=0$ which contradicts the fact that $\nu\in\M(G_{u,\theta})$.
	
We write $\S(\rd)$ for the family of  functions in the Schwartz class on $\rd$, see  \cite[Chapter 3]{Mat15}.	Let $\varphi\in\S(\rd)$, such that $\varphi(x) = 1$ in $\spt\mu$, where $\spt\mu$ denotes the (compact) support of $\mu$.  Then $\mu = \varphi \mu$ and $\widehat{\mu} = \widehat{\varphi \mu} = \widehat{\varphi}*\widehat{\mu}$. Moreover,  $\widehat{\varphi}\in\S(\rd)$, and for every $N\in\N$, $\big|\widehat{\varphi}(z)\big|\lesssim_{\varphi,N}\big(1+|z|\big)^{-N}$. Using H\"older's inequality with conjugate exponents $2/\theta$ and $2/(2-\theta)$, we obtain the following estimate for $z \in \rd$.
	\begin{align*}
		\big|\widehat{\mu}(z)\big|^{\frac{2}{\theta}} &\leq \bigg[ \int_{\rd}\big|\widehat{\mu}(z-y)\widehat{\varphi}(y)\big|\,dy \bigg]^{\frac{2}{\theta}}\\
		&= \bigg[ \int_{\rd} \big|\widehat{\mu}(z-y)\big|\big|\widehat{\varphi}(y)\big|^{\frac{\theta}{2}} \big|\widehat{\varphi}(y)\big|^{\frac{2-\theta}{2}}\,dy\bigg]^{\frac{2}{\theta}}\\
		&\leq \Bigg[ \bigg( \int_{\rd}\big| \widehat{\mu}(z-y) \big|^{\frac{2}{\theta}} \big| \widehat{\varphi}(y) \big|\,dy\bigg)^{\frac{\theta}{2}}\bigg( \int_{\rd} \big| \widehat{\varphi}(y) \big|\,dy \bigg)^{\frac{2-\theta}{2}} \Bigg]^{\frac{2}{\theta}}\\
		&= \int_{\rd} \big| \widehat{\mu}(z-y) \big|^{\frac{2}{\theta}}\big| \widehat{\varphi}(y) \big|\,dy \,\bigg(\int_{\rd} \big| \widehat{\varphi}(y) \big|\,dy \bigg)^{\frac{2-\theta}{\theta}}\\
		&\lesssim \int_{\rd}\big| \widehat{\mu}(z-y) \big|^{\frac{2}{\theta}}\big| \widehat{\varphi}(y) \big|\,dy = \big( \big|\widehat{\varphi}\big| * \big|\widehat{\mu}\big|^{\frac{2}{\theta}} \big)(z).
	\end{align*}	
	 By \eqref{eq:projmeasure}, $\widehat{\mu_{V}}(y) = \widehat{\mu}(y_{V})$, and so  we have the following estimate:
	\begin{align*}
		\int_{\gdk}\J_{u,\theta}(\mu_{V})^{1/\theta}\,d\nu(V) &= \int_{\gdk}\int_{\rk} \big| \widehat{\mu_{V}}(y) \big|^{\frac{2}{\theta}}|y|^{\frac{u}{\theta}-k} \, dy \,d\nu(V)\\
		&= \int_{\gdk}\int_{\rk}\big| \widehat{\mu}(y_{V}) \big|^{\frac{2}{\theta}}|y|^{\frac{u}{\theta}-k}\, dy \,d \nu(V)\\
		&\lesssim \int_{\gdk}\int_{\rk}\big( \big|\widehat{\varphi}\big|*\big|\widehat{\mu}\big|^{\frac{2}{\theta}} \big)(y_{V})|y|^{\frac{u}{\theta}-k}\, dy \,d\nu(V)\\
		&= \int_{\gdk}\int_{\rk}\bigg( \int_{\rd}\big| \widehat{\varphi}(y_{V}-z) \big| \big| \widehat{\mu}(z) \big|^{\frac{2}{\theta}}\,dz\bigg)|y|^{\frac{u}{\theta}-k}\, dy \,d\nu(V)\\
		&= \int_{\rd}\big| \widehat{\mu}(z) \big|^{\frac{2}{\theta}}\Bigg[\int_{\gdk}\int_{\rk} \big| \widehat{\varphi}(y_{V}-z) \big||y|^{\frac{u}{\theta}-k} \, dy \,d\nu(V)\Bigg]\,dz\\
		&\lesssim \int_{\rd}\big| \widehat{\mu}(z) \big|^{\frac{2}{\theta}}\Bigg[ \int_{\gdk}\int_{\rk}\big(1+|y_{V}-z|\big)^{-N}|y|^{\frac{u}{\theta}-k}\, dy \,d\nu(V) \Bigg]\,dz.
	\end{align*}
	
	To finish the proof of the theorem we need to show that the last integral is finite. It is enough to see that for $N$ sufficiently large,
	\begin{equation}\label{eq:thm_bound}
		\int_{\gdk}\int_{\rk}\big(1+|y_{V}-z|\big)^{-N}|y|^{\frac{u}{\theta}-k}\, dy \,d\nu(V)\lesssim |z|^{\frac{u}{\theta}+k(d-k)-d-\tau},
	\end{equation}
for all $z \in \rd$ with $|z| \geq 2$ because then, since $k(d-k)+\frac{u-s}{\theta}<\tau$,
	\begin{align*}
		\int_{\gdk}\J_{u,\theta}(\mu_{V})^{1/\theta}\,d\nu(V) 
		&\lesssim \int_{\rd} \big| \widehat{\mu}(z) \big|^{\frac{2}{\theta}}|z|^{\frac{u}{\theta}+k(d-k)-d-\tau}\,dz \\
		&\lesssim \int_{\rd}\big| \widehat{\mu}(z) \big|^{\frac{2}{\theta}}|z|^{\frac{s}{\theta}-d}\,dz\\
		&= \J_{s,\theta}(\mu)^{1/\theta}<\infty,
	\end{align*}
	since $s<\fs \mu$.  This establishes \eqref{eq:thm1_energy} and completes the proof.
	
	To prove \eqref{eq:thm_bound} note that from the definition of $\nu$ we have for all $r>0$ and $z\in \rd$,
	\begin{equation} \label{frostmancondition}
		\nu\big( \{ V\in\gdk : d(z,V)\leq r \} \big)\lesssim \bigg( \frac{r}{|z|}\bigg)^{\tau-(k-1)(d-k)}
	\end{equation}
where here and in what follows,  $d(z,Y) = \inf\{ |z-y| :  y \in Y\}$. Fix $z \in \rd$ with $|z| \geq 2$ and  split the integral into the dyadic annuli centred at $z$ as follows:
	\begin{align*}
		\int_{\gdk}\int_{\rk}\big(1+|y_{V} - z|\big)^{-N}&|y|^{\frac{u}{\theta}-k}\, dy \,d\nu(V) \\
    &\quad =\iint_{\{(V,y) : |y_{V}-z|\leq 1/2 \}} + \sum_{\{ j \geq 0 : |z|> 2^{j+1} \}}\iint_{\{ (V,y) : 2^{j-1}<|y_{V}-z|\leq 2^{j} \}} \\
    &\quad \quad + \sum_{\{ j \geq 0 : |z|\leq 2^{j+1} \}}\iint_{\{ (V,y) : 2^{j-1}<|y_{V}-z|\leq 2^j \}}
	\end{align*}
where the sums are over integer $j$.  For clarity, let us analyse each of the terms separately.\\
	
	\textbf{First term:} In this case we have $|y|\approx |z|$, the inclusion
	\begin{equation*}
		\{ (V,y) : |y_{V}-z|\leq 1/2 \}\subseteq\{ (V,y) : d(z,V) \leq 1/2 \},
	\end{equation*}
	and the trivial estimate 
	\begin{equation*}
		\int_{\rk}\big(1+|y_{V}-z|\big)^{-N}\, dy   \lesssim 1
	\end{equation*}
which holds for $N$ sufficiently large. These three things yield
	\begin{align*}
		\iint_{\{ (V,y) : |y_{V}-z|\leq 1/2 \}}&\big(1+|y_{V}-z|\big)^{-N}|y|^{\frac{u}{\theta}-k}\, dy \,d\nu(V)\\
		&\lesssim |z|^{\frac{u}{\theta}-k}\int_{\{ V : d(z,V)\leq 1/2 \}}\int_{\rk} \big(1+|y_{V}-z|\big)^{-N}\, dy \,d\nu(V)\\
		&\lesssim |z|^{\frac{u}{\theta}-k}\nu\big( \{ V : d(z,V)\leq 1/2 \} \big)\\
		&\lesssim |z|^{\frac{u}{\theta}-k-\tau+(k-1)(d-k)}\\
		&= |z|^{\frac{u}{\theta}+k(d-k)-d-\tau} \numberthis\label{eq:firstcase}
	\end{align*}
 by \eqref{frostmancondition}.
	
	\textbf{Second term:} Since $|y_{V}-z|\leq|z|/2$, we again have  $|y|\approx |z|$. 	Then
	\begin{align*}
		\sum_{\{ j \geq 0 : |z|>2^{j+1} \}}&\iint_{\{ (V,y): 2^{j-1}<|y_{V}-z|\leq 2^j \}} \big(1+|y_{V} - z|\big)^{-N}|y|^{\frac{u}{\theta}-k}\, dy \,d\nu(V) \\
		&\quad \lesssim |z|^{\frac{u}{\theta}-k}\sum_{\{ j \geq 0 : |z|>2^{j+1} \}} 2^{-jN}\iint_{\{(V,y) : 2^{j-1}<|y_{V}-z|\leq 2^j \}}\, dy \,d\nu(V)\\
		&\quad\lesssim |z|^{\frac{u}{\theta}-k}\sum_{\{ j\geq 0 : |z|>2^{j+1} \}}2^{-jN}2^{kj}\nu\big( \{ V : d(z,V)\leq 2^j \} \big)\\
		&\quad\lesssim |z|^{\frac{u}{\theta}-k-\tau+(k-1)(d-k)}\sum_{j=0}^\infty2^{j(k+\tau-(k-1)(d-k)-N)} \qquad \text{(by \eqref{frostmancondition})}\\
		&\quad\lesssim |z|^{\frac{u}{\theta}+k(d-k)-d-\tau} \numberthis\label{eq:secondcase}
	\end{align*}
provided  $N>k+\tau-(k-1)(d-k) = d+\tau-k(d-k)$.
	
	\textbf{Third term:}   Using that, for each relevant $j$,  $|y| \leq 2^j+|z| \leq 3 \cdot 2^{j}$ and $\nu(\gdk) \lesssim 1$,
	\begin{align*}
		\sum_{\{ j\geq 0 : |z|\leq 2^{j+1} \}}&\iint_{\{ (V,y): 2^{j-1}<|y_{V}-z|\leq 2^j \}}\big( 1+|y_{V}-z| \big)^{-N}|y|^{\frac{u}{\theta}-k}\, dy \,d\nu(V)\\
		& \lesssim\sum_{\{ j\geq 0: |z|\leq2^{j+1} \}}2^{-jN}\iint_{\{ (V,y) : 2^{j-1}<|y_{V}-z|\leq 2^j \}}|y|^{\frac{u}{\theta}-k}\, dy \,d\nu(V)\\
		& \lesssim \sum_{\{ j\geq 0 : |z|\leq 2^{j+1} \}}2^{-jN}\int_{\gdk}\int_{\{y : |y|\leq 3 \cdot 2^j \}}|y|^{\frac{u}{\theta}-k}\, dy \,d\nu(V)\\
		& \lesssim \sum_{\{ j\geq 0 : |z|\leq 2^{j+1} \}} 2^{-jN}\int_{\{y : |y|\leq 3 \cdot 2^j\}}|y|^{\frac{u}{\theta}-k}\,  dy \\
		& \lesssim\sum_{\{ j\geq 0 : |z|\leq 2^{j+1} \}}2^{j\big( \frac{u}{\theta}-N \big)}\\
	& \lesssim|z|^{ \frac{u}{\theta}-N }\\
& \leq |z|^{\frac{u}{\theta}+k(d-k)-d-\tau} \numberthis\label{eq:thirdcase}
	\end{align*}
	provided $N > \max\{ d+\tau-k(d-k), u/\theta \}$.  Combining  \eqref{eq:firstcase}, \eqref{eq:secondcase} and \eqref{eq:thirdcase} we have the theorem.
\end{proof}

\section{Applications}\label{section:applications}

\subsection{Bounding the exceptional set for Hausdorff dimension}\label{sec:improv}

Let $X\subseteq\rd$ be a Borel set and $\mu\in\M(\rd)$. Since for all $\theta\in[0,1]$, $\min\{ d, \fs \mu \}\leq \hd \mu$ and $\fs X\leq \hd X$, we can use Theorem~\ref{thm:exceptionalFourier} to bound the exceptional set for the \emph{Hausdorff} dimension.  This corollary can be viewed as our main result, even though Theorem~\ref{thm:exceptionalFourier} is stronger.

\begin{cor}\label{thm:exceptionalFouriercoro}
	Let $\mu\in\M(\rd)$, $\theta\in(0,1]$ and $1\leq k<d$ be an integer. Then for all $u\in[0,k]$,
	\begin{equation*}
		\hd \{ V\in\gdk : \hd \mu_{V}<u \}\leq\max\bigg\{0, k(d-k)+\inf_{\theta\in(0,1]}\frac{u-\fs  \mu}{\theta} \bigg\}.
	\end{equation*}
	Furthermore, if $X$ is a Borel set in $\rd$ and $\theta\in(0, 1]$, then for all $u\in[0,k]$,
	\begin{equation*}
		\hd \{ V\in \gdk : \hd  P_{V}(X)<u \}\leq \max\bigg\{0, k(d-k)+\inf_{\theta\in(0,1]}\frac{u-\fs  X}{\theta} \bigg\}.
	\end{equation*}
\end{cor}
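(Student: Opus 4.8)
The plan is to deduce Corollary~\ref{thm:exceptionalFouriercoro} directly from Theorem~\ref{thm:exceptionalFourier} by combining the inequality in Theorem~\ref{thm:exceptionalFourier} with the elementary relations between the Fourier spectrum and the Hausdorff dimension, then optimising over $\theta$. Since Theorem~\ref{thm:exceptionalFourier} is already proved, the content of the corollary is purely bookkeeping: replacing $\fs\mu_V$ by $\hd\mu_V$ on the left and $\fs\mu$ by $\hd\mu$ (or $\fs X$ by $\hd X$) on the right, and observing that the bound holds for every $\theta\in(0,1]$ so we may take the infimum.

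First I would handle the measure case. Fix $u\in[0,k]$ and $\theta\in(0,1]$. Since for every $V\in G(d,k)$ we have $\fs\mu_V\leq\hd\mu_V$ (this is the inequality $\fd\mu\leq\fs\mu\leq\sd\mu$ together with $\hd\mu\geq\min\{d,\sd\mu\}$ specialised appropriately; more simply, recall $\min\{d,\fs\mu_V\}\leq\hd\mu_V$, and since $\mu_V$ lives in $\rk$ with $k<d$ we in fact just use $\fs\mu_V\leq\hd\mu_V$), it follows that
\begin{equation*}
\{V\in G(d,k):\hd\mu_V<u\}\subseteq\{V\in G(d,k):\fs\mu_V<u\}.
\end{equation*}
Hence by Theorem~\ref{thm:exceptionalFourier},
\begin{equation*}
\hd\{V\in G(d,k):\hd\mu_V<u\}\leq\max\bigg\{0,k(d-k)+\frac{u-\fs\mu}{\theta}\bigg\}.
\end{equation*}
This inequality is valid for every $\theta\in(0,1]$; taking the infimum over $\theta\in(0,1]$ on the right-hand side yields the claimed bound, using the obvious fact that $\inf_\theta\max\{0,f(\theta)\}=\max\{0,\inf_\theta f(\theta)\}$ when $f$ is continuous (here $f(\theta)=k(d-k)+(u-\fs\mu)/\theta$). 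One subtlety worth a sentence: $\fs\mu$ depends on $\theta$, so the infimand $\frac{u-\fs\mu}{\theta}$ genuinely varies with $\theta$, which is exactly why the interpolation is useful rather than vacuous.

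For the set case the argument is the same, using $\fs P_V(X)\leq\hd P_V(X)$ for all $\theta$ and the set version of Theorem~\ref{thm:exceptionalFourier}, then taking the infimum over $\theta\in(0,1]$ of $k(d-k)+(u-\fs X)/\theta$. There is essentially no obstacle here; the only point requiring a modicum of care is the passage of the infimum through the $\max\{0,\cdot\}$, which is immediate, and making sure the hypothesis $u\in[0,k]$ (needed so that the set-version implication $\fs P_V(X)<u\Rightarrow\fs\mu_V<u$ from the proof of Theorem~\ref{thm:exceptionalFourier} goes through) is carried along. I expect the proof to be only a few lines.
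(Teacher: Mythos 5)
Your proposal takes essentially the same route the paper does: both deduce the corollary from Theorem~\ref{thm:exceptionalFourier} via the set inclusion $\{V:\hd\mu_V<u\}\subseteq\{V:\fs\mu_V<u\}$ and then pass the (vacuous to interchange) infimum over $\theta$ through the $\max\{0,\cdot\}$. One small correction to your justification of the inclusion: the unqualified inequality $\fs\mu_V\leq\hd\mu_V$ is \emph{false} in general, since the Fourier spectrum of a measure on $\rk$ can exceed $k$ (e.g.\ Lebesgue on $[0,1]$ has Sobolev and Fourier dimension $2$ but Hausdorff dimension $1$); the correct relation is $\min\{k,\fs\mu_V\}\leq\hd\mu_V$, and it is precisely the hypothesis $u\leq k$ that then yields $\hd\mu_V<u\Rightarrow\fs\mu_V<u$. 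You correctly carry $u\in[0,k]$ along, but attribute its role only to the set-version step of Theorem~\ref{thm:exceptionalFourier}, whereas it is also needed here for the measure-version inclusion itself.
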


Writing the $n$-fold convolution as $\mu^{*n}$  for  $n\in\N$, we obtain  the following corollary as a direct application of \cite[Lemma~6.2]{Fra22}.  This formulates our main result above for measures in terms of the Sobolev dimension of  convolutions.

\begin{cor}\label{thm:exceptionalFourierConvolutions}
	Let $\mu\in\M(\rd)$, $n\in\N$ and $1\leq k<d$ be an integer. Then for all $u\in[0,k]$,
	\begin{equation*}
		\hd \{ V\in\gdk : \hd \mu_{V}<u \}\leq\max\big\{0, k(d-k)+\inf_{n\in\N}(nu-\sd(\mu^{*n})) \big\}.
	\end{equation*}
\end{cor}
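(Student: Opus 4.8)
The plan is to deduce Corollary~\ref{thm:exceptionalFourierConvolutions} directly from the measure version of Corollary~\ref{thm:exceptionalFouriercoro} (equivalently, from Theorem~\ref{thm:exceptionalFourier}) by exploiting the relationship between the Fourier spectrum of $\mu$ and the Sobolev dimension of its convolution powers recorded in \cite[Lemma~6.2]{Fra22}. That lemma states (in the notation of \cite{Fra22}) that for $\mu\in\M(\rd)$ and $n\in\N$,
\begin{equation*}
	\fs[1/n]\mu \geq \frac{\sd(\mu^{*n})}{n},
\end{equation*}
so that setting $\theta = 1/n$ converts a statement about the Fourier spectrum at $\theta=1/n$ into one about $\sd(\mu^{*n})$. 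The key point is that $\big|\widehat{\mu^{*n}}\big| = \big|\widehat{\mu}\big|^n$, so the energy $\J_{s,1/n}(\mu)^{n}$ is, up to the exponent bookkeeping, exactly the Sobolev-type integral $\int |\widehat{\mu^{*n}}(z)|^2 |z|^{s'-d}\,dz$ for an appropriate $s'$; this is precisely what underlies \cite[Lemma~6.2]{Fra22}, which we are free to invoke.

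First I would fix $n\in\N$ and apply Theorem~\ref{thm:exceptionalFourier} with $\theta = 1/n$, giving for all $u\in[0,k]$
\begin{equation*}
	\hd\{ V\in\gdk : \fs[1/n]\mu_{V} < u \} \leq \max\bigg\{ 0, k(d-k) + n\big(u - \fs[1/n]\mu\big) \bigg\}.
\end{equation*}
Next I would compare the sets $\{V : \hd\mu_V < u\}$ and $\{V : \fs[1/n]\mu_V < u\}$. Since $\hd\mu_V \geq \min\{d, \sd\mu_V\}$ and more directly $\min\{d,\fs[\theta]\mu_V\}\leq \hd\mu_V$ for every $\theta$ — indeed $\fs[\theta]\mu_V \leq \sd\mu_V$ and $\hd\mu_V\geq\min\{d,\sd\mu_V\}$ — and since $u\leq k<d$, we get $\hd\mu_V < u \Rightarrow \fs[1/n]\mu_V < u$, hence
\begin{equation*}
	\hd\{ V\in\gdk : \hd\mu_V < u \} \leq \hd\{ V\in\gdk : \fs[1/n]\mu_V < u \}.
\end{equation*}
Then I would apply \cite[Lemma~6.2]{Fra22} in the form $n\,\fs[1/n]\mu \geq \sd(\mu^{*n})$, i.e.\ $-n\,\fs[1/n]\mu \leq -\sd(\mu^{*n})$, to bound $k(d-k) + n(u - \fs[1/n]\mu) \leq k(d-k) + nu - \sd(\mu^{*n})$. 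Combining the three displays gives, for each fixed $n$,
\begin{equation*}
	\hd\{ V\in\gdk : \hd\mu_V < u \} \leq \max\big\{ 0, k(d-k) + nu - \sd(\mu^{*n}) \big\},
\end{equation*}
and since the left-hand side does not depend on $n$, taking the infimum over $n\in\N$ on the right yields the claimed bound.

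The routine checks are: that \cite[Lemma~6.2]{Fra22} applies to $\mu\in\M(\rd)$ with no extra hypotheses (it does, as $\mu$ is compactly supported and finite), and that the normalisation/exponent conventions relating $\J_{s,\theta}$ with $\theta=1/n$ to the Sobolev integral of $\mu^{*n}$ match those in \cite{Fra22} so that the inequality $n\,\fs[1/n]\mu\geq\sd(\mu^{*n})$ is exactly what is cited. The only mild subtlety — and the step I would be most careful about — is the implication $\hd\mu_V < u \Rightarrow \fs[1/n]\mu_V < u$: one must use that $\fs[\theta]\mu_V\leq\sd\mu_V$ together with $\hd\mu_V\geq\min\{d,\sd\mu_V\}$ and the hypothesis $u\leq k<d$, so that $\fs[1/n]\mu_V\geq u$ would force $\sd\mu_V\geq u$, hence $\hd\mu_V\geq\min\{d,u\}=u$, contradicting $\hd\mu_V<u$. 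No genuine obstacle arises; the content is entirely in Theorem~\ref{thm:exceptionalFourier} and \cite[Lemma~6.2]{Fra22}, and this corollary is just the translation of the former through the latter.
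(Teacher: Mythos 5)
Your proof is correct and is exactly the intended route: Corollary~\ref{thm:exceptionalFourierConvolutions} is a direct translation of the measure statement of Theorem~\ref{thm:exceptionalFourier} at $\theta = 1/n$ through \cite[Lemma~6.2]{Fra22}. Two small remarks: (i) \cite[Lemma~6.2]{Fra22} in fact gives the \emph{equality} $\sd(\mu^{*n}) = n\,\dim_{\textup{F}}^{1/n}\mu$ (as used in the proof of Lemma~\ref{lma:khalil}), not merely an inequality, though of course the inequality direction you invoked suffices; and (ii) since $\mu_V$ lives on $V \cong \rk$ the natural bound is $\hd\mu_V \geq \min\{k, \sd\mu_V\}$ rather than $\min\{d, \sd\mu_V\}$, but your reduction $\hd\mu_V < u \Rightarrow \dim_{\textup{F}}^{1/n}\mu_V < u$ only uses $u \leq k$, so nothing breaks.
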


This corollary will be useful in Section~\ref{sec:example}, where we calculate the Fourier spectrum of an explicit self-affine measure at $\theta = 1/2$ and use it to bound the exceptional set of projections for the Hausdorff dimension of a self-affine set. That example shows that the Fourier spectrum can be used to improve on the state of the art bounds in higher dimensions in a representative situation, which we discuss in more detail in the next subsections.

\subsubsection{Improving the sharp bounds in dimension 2}

With  Corollary \ref{thm:exceptionalFouriercoro} in mind, and specialising only to the case of Borel sets $X \subseteq \rr$, we ask for conditions under which we are able to improve Ren--Wang's sharp inequality \cite[Theorem~1.2]{RW23}; recall \eqref{eq:oberlin}.  Indeed, this will happen  for $X\subseteq \R^2$ provided  
\begin{equation}\label{eq:improvingRW}
  1 + \frac{u - \fs X}{\theta}< 2u-\hd X
\end{equation}
for some $\theta\in(0,1]$ and some    $u$ in the range $ \frac{\hd X}{2} < u \leq \min\{  \hd X, 1 \}$. 

First consider the case when $\hd X<1$, see Figure~\ref{fig:RWImprovement}: Left. Since $\fs X \leq \hd X$, \eqref{eq:improvingRW} is only possible for $\theta<\frac{1}{2}$ in which case we require
\begin{equation*}
  \fs X>\frac{\hd X}{2} +\theta.
\end{equation*}
This would then give improvements on \eqref{eq:oberlin} for $u$ satisfying
\[
\frac{\hd X}{2} < u< \min\left\{\frac{\fs X - \theta(1+\hd X)}{1-2\theta}, \hd X\right\}.
\]

Next consider the case when $\hd X\geq1$, see Figure~\ref{fig:RWImprovement}: Right.  For $\theta<\frac{1}{2}$, we (again) require 
\begin{equation}\label{eq:slope1second}
  \fs X>\frac{\hd X}{2} +\theta
\end{equation}
and this  gives  improvements on \eqref{eq:oberlin} for $u$ satisfying
\[
\frac{\hd X}{2} < u< \min\left\{\frac{\fs X - \theta(1+\hd X)}{1-2\theta}, 1\right\}.
\]
On the other hand, for  $\theta>\frac{1}{2}$ we require
\[
 \fs X > 1+ \theta(\hd X-1)
\]
and this gives  improvements on \eqref{eq:oberlin} for $u$ satisfying
\[
\max\left\{\frac{  \theta(1+\hd X)-\fs X}{2\theta-1}, \frac{\hd X}{2}\right\}  < u< 1.
\]
Note   that this interval will be empty if  
\begin{equation}\label{eq:problem}
\fs X \geq \frac{\hd X}{2}+\theta.
\end{equation}
However,  since $\fs X$ is continuous,  non-decreasing and bounded above by $\hd X$, if \eqref{eq:problem} holds for some $\theta >\frac{1}{2}$ then it fails for $\theta >\tfrac{\hd X}{2}$, and we still get improvement on \eqref{eq:oberlin} by using a different $\theta$.

Curiously, for $\theta = \frac{1}{2}$ we get improvement for all $u$ as long as
\begin{equation}\label{eq:improv1half}
    \fd^{1/2}X >\frac{1+\hd X}{2}.
\end{equation}

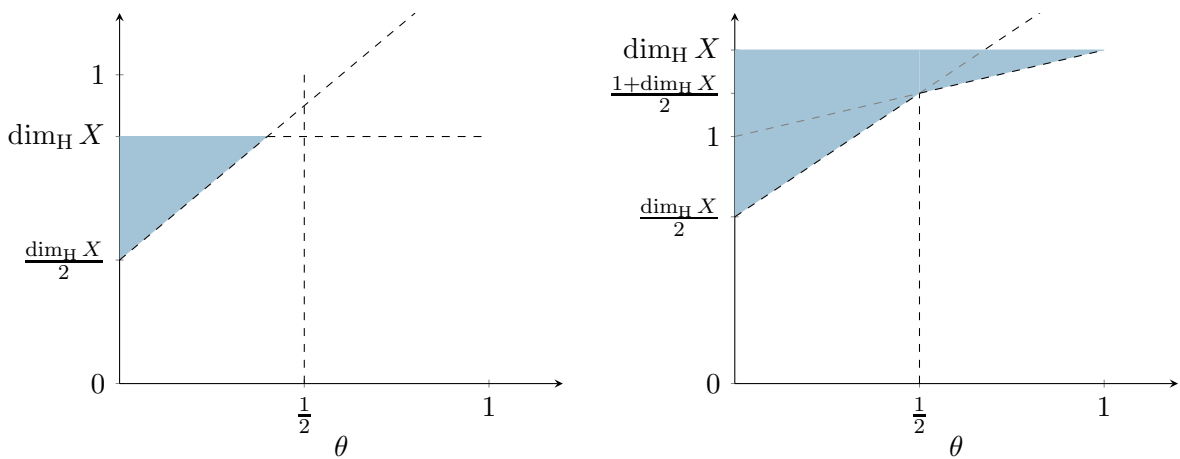
\begin{figure}[H]
  \centering
  \begin{subfigure}{.5\textwidth}
    \centering
    % THIS IS LEFT
    \begin{tikzpicture}[scale = 0.7]
      \def\dimX{0.8}
      \begin{axis}[
          axis lines = left,
          ymin = 0,
          ymax = 1.2,
          xmin = 0,
          xmax = 1.2,
          xlabel=$\theta$,
          xtick = {0.5,1},
          ytick = {0,\dimX/2,\dimX,1},
          xticklabels = {$\frac{1}{2}$, $1$},
          yticklabels = {$0$,$\frac{\hd X}{2}$, $\hd X$, $1$}
      ]
      \addplot [
          domain=-0:\dimX/2,
          samples=100, 
          color=plotblue,
          name path=dimX
      ]
      {\dimX};
      \addplot [
          domain=\dimX/2:1,
          style=dashed,
          samples=100, 
          color=black
      ]
      {\dimX};
      \addplot [
          domain=0:0.8, 
          samples=100, 
          style=dashed,
          color=black,
          name path=lower
      ]
      {\dimX/2 +x};
      \addplot[plotblue] fill between[of=lower and dimX, soft clip={domain=0:\dimX/2}];
      \addplot[
        domain=0:6,
        style=dashed,
      ] coordinates {(0.5,0)(0.5,1)};
      \end{axis}
    \end{tikzpicture}
  \end{subfigure}%
  \begin{subfigure}{.5\textwidth}
    \centering
    % THIS IS RIGHT
    \begin{tikzpicture}[scale = 0.7]
      \def\dimX{1.35}
      \begin{axis}[
          axis lines = left,
          ymin = 0,
          ymax = 1.5,
          xmin = 0,
          xmax = 1.2,
          xlabel=$\theta$,
          xtick = {0.5,1},
          ytick = {0,\dimX/2, 1, (1+\dimX)/2,\dimX},
          xticklabels = {$\frac{1}{2}$, $1$},
          yticklabels = {$0$,$\frac{\hd X}{2}$, $1$, $\frac{1+\hd X}{2}$,$\hd X$}
      ]
      \addplot [
          domain=0:0.5,
          samples=100, 
          color=plotblue,
          name path=dimX1
      ]
      {\dimX};
      \addplot [
          domain=0.5:0.68,
          samples=100, 
          color=plotblue,
          name path=dimX2
      ]
      {\dimX};
      \addplot [
          domain=0.68:1,
          samples=100, 
          color=plotblue,
          name path=dimX
      ]
      {\dimX};
      \addplot[
        domain=0:1,
        style=dashed,
      ] coordinates {(0.5,0)(0.5,0.5 +\dimX/2)};
      % \addplot[ %THIS IS THE VERTICAL LINE IN COLOUR AT THETA = 1/2
      %   domain=0:6,
      %   plotblue,
      %   opacity=0.4
      % ] coordinates {(0.5,\dimX/2 + 0.5)(0.5,\dimX)};
      \addplot [
          domain=0:0.5, 
          samples=100, 
          style=dashed,
          name path=lower1
      ]
      {\dimX/2 +x};
      \addplot [
          domain=0.5:0.68, 
          samples=100, 
          color=gray,
          style=dashed,
          name path=lower2
      ]
      {\dimX/2 +x};
      \addplot [
          domain=0.68:1, 
          samples=100, 
          color=black,
          style=dashed,
      ]
      {\dimX/2 +x};
      \addplot [
          domain=0:0.5, 
          samples=100, 
          color=gray,
          style=dashed,
          name path=upper1
      ]
      {1+x*(\dimX-1)};
      \addplot [
          domain=0.5:1, 
          samples=100, 
          style=dashed,
          color=black,
          name path=upper2
      ]
      {1+x*(\dimX-1)};
      % \addplot [ %THIS IS JUST A HORIZONTAL LINE AT 1/2 + HD/2
      %     domain=-0:1,
      %     style=dashed,
      %     samples=100, 
      %     color=black,
      %     opacity=0.4,
      %     name path=halfBound
      % ]
      % {(1+\dimX)/2};
      
      %COLOURED REGION FOR \theta<1/2
      \addplot[plotblue] fill between[of=lower1 and dimX1, soft clip={domain=0:1}];
      %COLOURED REGION FOR \theta>1/2
      % \addplot[plotblue, opacity=0.4] fill between[of=upper2 and lower2, soft clip={domain=0.5:0.68}];
      \addplot[plotblue] fill between[of=upper2 and dimX2, soft clip={domain=0.5:1}];
      \end{axis}
    \end{tikzpicture}
  \end{subfigure}
  \caption{In order to improve Ren--Wang's exceptional set estimate for a Borel set $X \subseteq \rr$, we need the Fourier spectrum of $X$ to intersect the  shaded region. Left: when $\hd X<1$. Right: when $\hd X\geq1$.}
  \label{fig:RWImprovement}
  \end{figure}

It is easily seen in Figure~\ref{fig:RWImprovement} that if   $\fd X>\hd X/2$, then we get improvement on \eqref{eq:oberlin}, but we already knew this from the discussion in Section \ref{section:FDproj}.  However, what Figure~\ref{fig:RWImprovement} now reveals is that this is not needed and, in fact, improvement can be gained for sets with small Fourier dimension but large Fourier spectrum.  Important to observe here is that the slope of the lines bounding the shaded region is never more than 1, whereas the slope of the Fourier spectrum of a subset of the plane can be as large as 2, see \cite{CFdO24} (that is, the Fourier spectrum can `catch up' even if the Fourier dimension is too small to give improvement by itself).  It is more or less clear that  examples admitting improvement of \eqref{eq:oberlin} via the Fourier spectrum abound.   That said, building explicit examples is not completely straightforward, partly  because it is difficult to compute the Fourier spectrum in specific settings.

Another interesting feature of Figure~\ref{fig:RWImprovement} is that if $\hd X \leq  1$, then the Fourier spectrum can only be used to improve \eqref{eq:oberlin} if $\theta < \hd X/2 \leq 1/2$ but if  $\hd X \geq  1$, then the whole range of $\theta$ is potentially relevant. One reason for this could be that in the case $\hd X<1$, the bound given in Theorem~\ref{thm:exceptionalFourier} for $\theta=1$ is not sharp, as was mentioned in Section~\ref{section:preliminariesProj}. This is one of the reasons why it would be interesting to obtain a Fourier spectrum analogue of the sharp bounds. 

By scrutinising the limit as $\theta \to 1$ we get a useful corollary. 

\begin{cor}
 Let $X \subseteq \rr$ be a Borel set with $\hd X >1$ and 
\[
D = \overline{\partial_{-}}\fs X\big|_{\theta =1}  = \limsup_{\theta\to1} \frac{\hd X - \fs X}{1-\theta} 
\]
be the upper left semi-derivative of $\fs X$ at $\theta = 1$.  If $D <\hd X -1$, then there is necessarily improvement on Ren--Wang's inequality \eqref{eq:oberlin}.
\end{cor}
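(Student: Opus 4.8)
The plan is to show that the condition $D < \hd X - 1$ forces the improvement inequality \eqref{eq:improvingRW} to hold for some admissible $\theta$ close to $1$ and some $u$ close to $1$. Since $\hd X > 1$, the relevant range for $u$ in Ren--Wang's bound \eqref{eq:oberlin} is $\frac{\hd X}{2} < u \leq 1$, and we will take $u = 1$ (or $u \to 1^-$). With $u = 1$, the inequality \eqref{eq:improvingRW} becomes
\[
1 + \frac{1 - \fs X}{\theta} < 2 - \hd X,
\]
i.e. $\frac{\fs X - 1}{\theta} > \hd X - 1$, i.e. $\fs X > 1 + \theta(\hd X - 1)$, which is exactly the condition \eqref{eq:slope1second}-type requirement appearing in the $\theta > \frac{1}{2}$ discussion (the `upper' constraint $\fd^\theta X > 1 + \theta(\hd X - 1)$).

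First I would write $\theta = 1 - \delta$ for small $\delta > 0$ and use the definition of the upper left semi-derivative $D = \limsup_{\theta \to 1} \frac{\hd X - \fs X}{1-\theta}$: since $D < \hd X - 1$, pick $D' $ with $D < D' < \hd X - 1$; then for all sufficiently small $\delta > 0$ we have $\hd X - \fd^{1-\delta} X \leq D' \delta$, hence
\[
\fd^{1-\delta} X \;\geq\; \hd X - D'\delta.
\]
On the other hand, the target condition $\fd^{1-\delta} X > 1 + (1-\delta)(\hd X - 1) = \hd X - \delta(\hd X - 1)$ will follow provided $\hd X - D'\delta > \hd X - \delta(\hd X - 1)$, i.e. provided $D' < \hd X - 1$, which holds by construction. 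So for all small enough $\delta$, the required strict inequality holds at $\theta = 1 - \delta$, and strictness is preserved when passing from $u = 1$ to $u$ slightly below $1$ by continuity of the affine expressions in $u$. Finally, one checks that such $u$ genuinely lies in the range $(\frac{\hd X}{2}, 1]$ where \eqref{eq:oberlin} is the sharp bound being improved: since $\hd X > 1$ we have $\frac{\hd X}{2} < 1$, so $u$ near $1$ is admissible, and the strict inequality \eqref{eq:improvingRW} at this $(\theta, u)$ is precisely the statement that Corollary~\ref{thm:exceptionalFouriercoro} beats \eqref{eq:oberlin}.

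The only mildly delicate point is the interplay between the $\limsup$ in the definition of $D$ and the need for the inequality to hold for a whole interval of small $\delta$ (rather than just a sequence): because $D$ is defined as a $\limsup$, the bound $\hd X - \fd^{1-\delta}X \leq D'\delta$ holds for all $\delta$ in a punctured neighbourhood of $0$, which is exactly what we need — no monotonicity or subsequence trickery is required, though one should note that $\fs X$ is non-decreasing and continuous in $\theta$ (as recorded in the preliminaries) to be sure the expressions behave well and that the resulting improvement is not vacuous. I expect this to be the main (and essentially only) obstacle, and it is a minor one; the rest is the bookkeeping of substituting $u=1$, $\theta = 1-\delta$ into \eqref{eq:improvingRW} and reading off the conclusion from Corollary~\ref{thm:exceptionalFouriercoro} together with \eqref{eq:oberlin}.
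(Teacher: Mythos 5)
Your proof is correct and follows exactly the argument that the paper leaves implicit (the corollary is stated as a consequence of the preceding computation and Figure~\ref{fig:RWImprovement} with no separate proof). You substitute $u=1$ into \eqref{eq:improvingRW} to obtain the target condition $\fs X > 1 + \theta(\hd X - 1)$ for $\theta > \tfrac12$, pick $D' \in (D, \hd X - 1)$ and use the definition of the limsup to get $\fs X > \hd X - D'(1-\theta)$ for all $\theta$ in a punctured left neighbourhood of $1$, which beats $1 + \theta(\hd X -1) = \hd X - (1-\theta)(\hd X - 1)$ precisely because $D' < \hd X - 1$; this is the intended reasoning.
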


\subsubsection{Improving the known  bounds in higher dimensions}

We now consider the higher dimensional case, where the sharp bounds are not known.  Let $X\subseteq\rd$, for $d\geq3$, be a Borel set and $1\leq k<d$ be an integer. We can use Corollary~\ref{thm:exceptionalFouriercoro} to improve Mattila's \eqref{eq:mattilaboundgeneral} and Peres--Schlag's \eqref{eq:peresschlagbound} when $\hd X\leq k$ and $\hd X>k$, respectively.

Let us first consider the case $\hd X\leq k$; see Figure \ref{fig:PSImprovement}: Left. Then we want to have for some $\theta\in(0,1]$ and some $0<u\leq\hd X$,
\begin{equation*}
    k(d-k) + \frac{u-\fs X}{\theta} < k(d-k-1) + u,
\end{equation*}
which yields
\begin{equation*}
    u(1-\theta) + k\theta< \fs X.
\end{equation*}
Note that no improvement will be possible when $u = \hd X$, that is, when \eqref{eq:mattilaboundgeneral} is sharp.  However, for smaller values of $u$ (especially for $u$ close to zero) improvement is possible, and even quite common; see Corollaries \ref{genericimprove} and \ref{genericimprove2}.
% For the higher-dimensional case, let $X\subseteq\rd$ for $d\geq3$ be a Borel set and $1\leq k<d$ be an integer. If $\hd X\leq k$ then we could use Corollary~\ref{thm:exceptionalFouriercoro} with $u=\hd X$ to improve \eqref{eq:mattilaboundgeneral}. For this we need to have for some $\theta\in(0,1]$,
% \begin{equation*}
%     k(d-k) + \frac{\hd X - \fs X}{\theta}\leq k(d-k-1) + \hd X.
% \end{equation*}
% However, this implies that we need $\theta(k-\hd X) +\hd X\leq\fs X$, which is not possible for $\hd X<k$.

% If $\hd X>k$ then by Theorem~\ref{thm:exceptionalFourier} with $u=k$, we may improve Peres--Schlag's bound \eqref{eq:peresschlagbound} if for some $\theta\in(0,1]$,
% \begin{equation*}
%   k(d-k) + \frac{k-\fs X}{\theta} \leq k(d-k) + k -\hd X.
% \end{equation*}
% This is possible as long as $k(1-\theta) + \theta\hd X\leq \fs X$ for some $\theta\in(0,1]$.

Similarly, in the case $\hd X \geq k$ we can improve Peres--Schlag's bound \eqref{eq:peresschlagbound}; see Figure \ref{fig:PSImprovement}: Right. This would happen if
\begin{equation}
  k(d-k) + \frac{u-\fs X}{\theta} < k(d-k) + u - \hd X,
\end{equation}
for some $\theta\in(0,1]$ and some $u$ in the range $0\leq u\leq k$. This gives
\begin{equation}\label{eq:improvePS}
    u(1-\theta)+ \theta\hd X < \fs X.
\end{equation}
This time improvement is possible for any relevant value of $u$. In fact, it turns out that  the Fourier spectrum will lead to an improvement over the known bounds for  `most' sets in some sense.  This is captured by the following corollary.

  \begin{cor} \label{genericimprove}
    Let $X \subseteq \rd$ be a Borel set.     If $\fs X  >\theta \max\{k,\hd X\} $ for some $\theta \in [0,1)$ then there is necessarily improvement on Peres--Schlag's inequality \eqref{eq:peresschlagbound} and Mattila's inequality \eqref{eq:mattilaboundgeneral} for all sufficiently small $u>0$.
   \end{cor}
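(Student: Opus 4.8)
The plan is to extract a single favourable exponent $\theta_0 \in (0,1)$ from the hypothesis, substitute only that value into the infimum appearing in Corollary~\ref{thm:exceptionalFouriercoro}, and then compare the resulting estimate against whichever of Mattila's bound \eqref{eq:mattilaboundgeneral} (when $\hd X \le k$) or Peres--Schlag's bound \eqref{eq:peresschlagbound} (when $\hd X \ge k$) is in force; as in the discussion preceding the corollary, ``improvement'' means a strict inequality between these two upper bounds.

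\emph{Reducing to $\theta_0 \in (0,1)$.} If the hypothesis holds only with $\theta_0 = 0$ it merely asserts $\fd X > 0$. The function $\theta \mapsto \fd^{\theta} X - \theta \max\{k, \hd X\}$ is continuous on $[0,1]$ --- the Fourier spectrum of a Borel set is continuous at every $\theta \in [0,1]$ by \cite[Theorem~1.5]{Fra22} --- and takes the value $\fd X > 0$ at $\theta = 0$, hence it stays positive on some interval $[0,\delta]$ and any $\theta_0 \in (0,\delta)$ will do. In every case, then, we may fix $\theta_0 \in (0,1)$ with
\[
c \;:=\; \fd^{\theta_0} X - \theta_0 \max\{k, \hd X\} \;>\; 0 .
\]
(This in particular forces $\hd X > 0$, since otherwise $\fd^{\theta_0} X \le \hd X = 0$ would give $c \le 0$.)

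\emph{Comparing the bounds.} Retaining only the $\theta = \theta_0$ term in the infimum, Corollary~\ref{thm:exceptionalFouriercoro} gives, for $u \in [0,k]$,
\[
\hd \{ V \in \gdk : \hd P_{V}(X) < u \} \;\le\; \max\Big\{ 0,\; k(d-k) + \frac{u - \fd^{\theta_0} X}{\theta_0} \Big\} .
\]
If $\hd X \le k$, the target is $k(d-k-1) + u = k(d-k) - k + u$; if $\hd X \ge k$, it is $k(d-k) + u - \hd X$. In both cases, multiplying through by $\theta_0 > 0$ and rearranging, the displayed upper bound is strictly smaller than the target precisely when
\[
u(1 - \theta_0) \;<\; \fd^{\theta_0} X - \theta_0 \max\{k, \hd X\} \;=\; c ,
\]
the appearance of $\max\{k, \hd X\}$ being exactly what makes the two computations collapse to the same condition. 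Since $1 - \theta_0 > 0$ and $c > 0$, this holds for every $u$ with $0 < u < c/(1-\theta_0)$; for $u$ this small one also has $u \le k$ and $u < \hd X$, so all the cited estimates apply. This is the asserted improvement for all sufficiently small $u > 0$, and at $\hd X = k$ the two target bounds coincide, so the case split is consistent.

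\emph{Main obstacle.} There is essentially nothing hard here apart from the endpoint $\theta_0 = 0$, which is dealt with by continuity of the Fourier spectrum at $\theta = 0$ for Borel sets; the remainder is the one-line algebra above. The only point needing a little care is to make sure one compares against the bound that genuinely applies to the given set --- Mattila's when $\hd X \le k$, Peres--Schlag's when $\hd X \ge k$ --- which is precisely why the hypothesis is phrased with $\max\{k, \hd X\}$ rather than with $k$ and $\hd X$ separately.
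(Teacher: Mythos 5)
Your proof is correct and follows the same route the paper intends: the corollary is stated as a consequence of the displayed conditions $u(1-\theta)+k\theta<\fs X$ (for Mattila's bound) and $u(1-\theta)+\theta\hd X<\fs X$ (for Peres--Schlag's bound), which you unify via $\max\{k,\hd X\}$ exactly as the hypothesis is phrased. Your explicit handling of the endpoint $\theta_0=0$ via continuity of $\theta\mapsto\fs X$ is a sensible and correct addition that the paper leaves implicit.
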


 Important to note in the above is  that it \emph{always} holds that $\fs X  \geq \theta \hd X$; see \cite[Theorem 1.1]{Fra22}.  We now show that one only has to impose very mild conditions on $X$ in order to get $\fs X  >\theta \max\{k,\hd X\} $ for all sufficiently small $\theta$.  First of all, note that we get strict inequality close to $\theta=0$  immediately if the Fourier dimension is strictly positive and so we focus on the case where the Fourier dimension is zero.   We use recent work of Khalil \cite{Kha23} to get good information about the Fourier spectrum.

Following \cite{Kha23}, a Borel measure $\mu$ on $\rd$ is \textit{$(C,\alpha)$-uniformly affinely non-concentrated} if it satisfies that there exists $C\geq 1$ and $\alpha>0$ such that for every $\varepsilon>0$, $x\in\rd$, $0<r\leq 1$ and  hyperplane $W\subseteq\rd$,
\begin{equation}\label{eq:Khalil}
  \mu\big( W^{(\varepsilon r)}\cap B(x,r) \big)\leq \varepsilon^{\alpha}\mu\big( B(x,r) \big),
\end{equation}
where $W^{(r)}$ is the $r$-neighbourhood of $W$.  This condition will be satisfied very generally.  For example, when $d=1$ asking that $X \subseteq \mathbb{R}$ supports such a measure   is simply asking for $X$ to be uniformly perfect.  In higher dimensions the condition is satisfied widely for IFS attractors not lying in a hyperplane (including self-similar sets, self-affine sets, self-conformal sets) and many other sets including Mandelbrot percolation, Julia sets etc.  Many of these sets may have Fourier dimension zero (arithmetic self-similar sets,  self-affine sponges etc) but it turns out that even when the Fourier dimension is zero, the Fourier spectrum will be `as large as possible' near $\theta = 0$.

\begin{lma}\label{lma:khalil}
  If $X\subset\rd$ is a compact set with $\fd X = 0$ which  supports a $(C, \alpha)$-uniformly affinely non-concentrated Borel measure, then
\[
\limsup_{\theta \to 0} \frac{ \fs X} {\theta} = d.
\]
\end{lma}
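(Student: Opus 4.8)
The plan is to establish the two inequalities $\limsup_{\theta\to0}\tfrac{\fs X}{\theta}\le d$ and $\limsup_{\theta\to0}\tfrac{\fs X}{\theta}\ge d$ separately. The upper bound is soft: by \cite[Proposition~4.2]{CFdO24} we have $\fs X\le\fd X+d\theta$ for every $\theta\in[0,1]$, so the hypothesis $\fd X=0$ forces $\fs X\le d\theta$, whence $\tfrac{\fs X}{\theta}\le d$ for all $\theta\in(0,1]$ and the $\limsup$ is at most $d$. (This is the only place the hypothesis $\fd X=0$ enters, and it is essential: if $\fd X>0$ then $\tfrac{\fs X}{\theta}\ge\tfrac{\fd X}{\theta}\to\infty$.) Thus all the content is in the matching lower bound, which is where Khalil's theorem does the work.

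For the lower bound, fix a $(C,\alpha)$-uniformly affinely non-concentrated Borel measure $\mu$ with $\spt\mu\subseteq X$; since $X$ is compact we may normalise so that $\mu\in\M(X)$, and put $M=\mu(\rd)$. The key input I would extract from \cite{Kha23} is an $L^p$-flattening estimate for $\widehat{\mu}$: for every $\varepsilon>0$ there is an exponent $p=p(\varepsilon)\ge2$ such that $\int_{2^j\le|z|\le2^{j+1}}|\widehat{\mu}(z)|^{p}\,dz\lesssim_{\varepsilon}2^{j\varepsilon}$ for all $j\ge0$. After possibly enlarging $p(\varepsilon)$ — harmless because $|\widehat{\mu}(z)|\le M$ gives $\int|\widehat{\mu}|^{p'}\le M^{p'-p}\int|\widehat{\mu}|^{p}$ on any fixed domain when $p'\ge p$ — we may assume $p(\varepsilon)\to\infty$ as $\varepsilon\to0$. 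Equivalently, via \cite[Lemma~6.2]{Fra22} (as already used for Corollary~\ref{thm:exceptionalFourierConvolutions}), this input says that $\sd(\mu^{*n})$ can be made larger than $d-\varepsilon$ for a suitable $n=n(\varepsilon)\to\infty$, and I would present it in whichever of these equivalent forms matches Khalil's statement most directly.

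Given $\varepsilon\in(0,d)$, set $\theta=\theta(\varepsilon)=2/p(\varepsilon)\in(0,1]$ (so $2/\theta=p(\varepsilon)$) and fix any $s$ with $0<s<(d-\varepsilon)\theta$. I would then bound the energy $\J_{s,\theta}(\mu)^{1/\theta}=\int_{\rd}|\widehat{\mu}(z)|^{2/\theta}|z|^{s/\theta-d}\,dz$ by splitting at $|z|=1$. On $\{|z|\le1\}$ one uses $|\widehat{\mu}|\le M$ and $\int_{|z|\le1}|z|^{s/\theta-d}\,dz<\infty$, which holds because $s>0$. On $\{|z|>1\}$, since $s/\theta<d-\varepsilon<d$ the weight $|z|^{s/\theta-d}$ is decreasing, so a dyadic decomposition together with the flattening estimate gives, for each $j\ge0$, $\int_{2^j\le|z|<2^{j+1}}|\widehat{\mu}(z)|^{2/\theta}|z|^{s/\theta-d}\,dz\lesssim 2^{j(s/\theta-d)}\int_{2^j\le|z|<2^{j+1}}|\widehat{\mu}(z)|^{2/\theta}\,dz\lesssim_{\varepsilon}2^{j(s/\theta-(d-\varepsilon))}$, which is summable in $j$ because $s/\theta<d-\varepsilon$. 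Hence $\J_{s,\theta}(\mu)<\infty$, so $\fs\mu\ge(d-\varepsilon)\theta$, and therefore $\fs X\ge\min\{(d-\varepsilon)\theta,d\}=(d-\varepsilon)\theta$. Thus $\tfrac{\fs X}{\theta}\ge d-\varepsilon$ for $\theta=\theta(\varepsilon)$, and since $\theta(\varepsilon)\to0$ as $\varepsilon\to0$, letting $\varepsilon\to0$ gives $\limsup_{\theta\to0}\tfrac{\fs X}{\theta}\ge d$, which together with the upper bound proves the lemma.

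The main obstacle is the first step of the lower bound: correctly invoking Khalil's theorem and seeing that it delivers an estimate of the stated $L^p$-flattening type with $p(\varepsilon)\to\infty$ (equivalently, that the Sobolev dimensions of the convolution powers $\mu^{*n}$ approach $d$). Everything after that is the routine dyadic energy bound sketched above; the only points requiring a little care there are the near-origin integrability, which forces $s>0$ and hence $\varepsilon<d$, and the elementary monotonicity argument used to push $p(\varepsilon)$ to infinity.
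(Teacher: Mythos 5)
Your proposal is correct and follows essentially the same route as the paper: the upper bound via $\fs X \leq \fd X + d\theta$ from \cite[Proposition~4.2]{CFdO24}, and the lower bound via Khalil's flattening result. The one cosmetic difference is that the paper cites \cite[Theorem~1.6]{Kha23} in the form ``the Frostman dimension of $\mu^{*n}$ tends to $d$,'' converts this to $\sd(\mu^{*n}) \to d$, and then invokes \cite[Lemma~6.2]{Fra22} (which states $\sd(\mu^{*n}) = n\fd^{1/n}\mu$) to reach the Fourier spectrum; your dyadic $(s,\theta)$-energy computation with $\theta = 2/p(\varepsilon)$ is in effect a re-derivation of that lemma, and the $L^p$-flattening estimate you extract is equivalent to the Frostman form (for $p=2n$) by the standard Frostman--Sobolev energy chain, so the obstacle you flag at the end amounts only to matching your phrasing of Khalil's input to the form actually stated in \cite{Kha23}.
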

 \begin{proof}
Let $\mu$ be a $(C, \alpha)$-uniformly affinely non-concentrated Borel measure on $X$.  Then  \cite[Theorem~1.6]{Kha23} gives that the Frostman dimension of the $n$-fold convolution $\mu^{*n}$ converges to $d$ as $n \to \infty$.  However, since $\fd \mu \leq \fd X = 0$, it follows from   \cite[Corollary~1.8]{Fra22} and the fact that the Frostman dimension is a lower bound for the Sobolev dimension, that  $\sd(\mu^{*n}) \to d$ as $n \to \infty$ also.  Moreover, by  \cite[Lemma~6.2]{Fra22} $\sd(\mu^{*n}) = n\fd^{1/n}\mu$, so we get
\begin{equation*}
\limsup_{\theta \to 0} \frac{ \fs X} {\theta} \geq  \limsup_{n\to\infty} \frac{\fd^{1/n}\mu}{1/n} = \limsup_{n\to\infty}\sd(\mu^{*n}) = d.
\end{equation*}
Recall that by \cite[Proposition~4.2]{CFdO24}, the reverse inequality also holds.
 \end{proof}

Combining Corollary \ref{genericimprove} and Lemma \ref{lma:khalil} we get the following, which provides improvement on the best known exceptional set estimates very generally.

  \begin{cor} \label{genericimprove2}
    Let $X \subseteq \rd$ be a compact set with $0<\hd X<d$ and which     supports a $(C, \alpha)$-uniformly affinely non-concentrated Borel measure.  Then the Fourier spectrum provides an  improvement on Peres--Schlag's inequality \eqref{eq:peresschlagbound} and Mattila's inequality \eqref{eq:mattilaboundgeneral} for all sufficiently small $u>0$.
   \end{cor}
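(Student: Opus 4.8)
The plan is to deduce the corollary directly from Corollary~\ref{genericimprove}, whose only hypothesis is that $\fs X > \theta \max\{k, \hd X\}$ for some $\theta \in [0,1)$. So the whole argument reduces to verifying this single inequality for the sets under consideration, and I would split into two cases according to whether $\fd X$ is positive or zero.

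If $\fd X > 0$: recall that $\fs X$ is continuous on $[0,1]$ (for sets, by \cite[Theorem~1.5]{Fra22}) and equals $\fd X > 0$ at $\theta = 0$, whereas $\theta \max\{k,\hd X\} \to 0$ as $\theta \to 0$. Hence $\fs X > \theta \max\{k, \hd X\}$ for all sufficiently small $\theta \in (0,1)$, and Corollary~\ref{genericimprove} applies. Note that this case does not even use the non-concentration hypothesis.

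If $\fd X = 0$: here I would invoke Lemma~\ref{lma:khalil}. Since $X$ is compact, has zero Fourier dimension, and supports a $(C,\alpha)$-uniformly affinely non-concentrated measure, the lemma gives $\limsup_{\theta\to 0} \fs X / \theta = d$. On the other hand, $\hd X < d$ by hypothesis and $k \le d-1 < d$, so $\max\{k, \hd X\} < d$ strictly. Consequently there is a small $\theta \in (0,1)$ with $\fs X / \theta > \max\{k, \hd X\}$, that is, $\fs X > \theta \max\{k, \hd X\}$; Corollary~\ref{genericimprove} then yields the claimed improvement on \eqref{eq:peresschlagbound} and \eqref{eq:mattilaboundgeneral} for all sufficiently small $u > 0$.

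There is essentially no analytic content beyond what is already in Lemma~\ref{lma:khalil} (itself built on Khalil's theorem \cite[Theorem~1.6]{Kha23} together with the Sobolev/Fourier-spectrum dictionary of \cite{Fra22}) and Corollary~\ref{genericimprove}. The only delicate point, and it is a minor one, is ensuring the strict inequality $\max\{k, \hd X\} < d$, which is precisely where the hypotheses $\hd X < d$ and $k < d$ enter: they guarantee that the $\limsup$ value $d$ strictly exceeds the threshold, so that an admissible $\theta$ can actually be chosen in the second case.
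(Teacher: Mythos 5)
Your proof is correct and follows essentially the same route as the paper, which introduces this corollary precisely as the combination of Corollary~\ref{genericimprove} and Lemma~\ref{lma:khalil}, with the same split into the cases $\fd X > 0$ (handled by continuity at $\theta = 0$) and $\fd X = 0$ (handled by the lemma). Your explicit check that $\max\{k,\hd X\} < d$ is the right place to see where the hypothesis $\hd X < d$ (and the implicit $k < d$) enters.
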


\begin{figure}[H]
  \begin{subfigure}{.5\textwidth}
    \centering
    \begin{tikzpicture}[scale = 0.7]
      \def\dimX{1}
      \def\k{1.35}
      \def\u{0.4}
      \begin{axis}[
          axis lines = left,
          ymin = 0,
          ymax = 1.5,
          xmin = 0,
          xmax = 1.2,
          xlabel=$\theta$,
          xtick = {1},
          ytick = {0,\u, \k,\dimX},
          xticklabels = {$1$},
          yticklabels = {$0$,$u$,$k$,$\hd X$}
      ]
      \addplot [
          domain=0:0.63,
          samples=100, 
          color=gray,
          style=dashed,
          name path=dimX
      ]
      {\dimX};
      \addplot [
          domain=0.63:1,
          samples=100, 
          color=black,
          style=dashed,
          name path=dimX2
      ]
      {\dimX};
      \addplot [
          domain=0:1,
          samples=100, 
          color=black,
          style=dashed,
          name path=ux
      ]
      {\u*(1-x) + \k*x};
      \addplot [
          domain=0:1,
          samples=100, 
          color=black,
          style=dashed,
          name path=kx
      ]
      {\k};
      \addplot[plotblue] fill between[of=ux and dimX, soft clip={domain=0:0.63}];
      
      \end{axis}
    \end{tikzpicture}
  \end{subfigure}%
  \begin{subfigure}{.5\textwidth}
    \centering
    \begin{tikzpicture}[scale = 0.7]
      \def\dimX{1.35}
      \def\k{0.8}
      \def\u{0.4}
      \begin{axis}[
          axis lines = left,
          ymin = 0,
          ymax = 1.5,
          xmin = 0,
          xmax = 1.2,
          xlabel=$\theta$,
          xtick = {1},
          ytick = {0,\u, \k,\dimX},
          xticklabels = {$1$},
          yticklabels = {$0$,$u$,$k$,$\hd X$}
      ]
      \addplot [
          domain=0:1,
          samples=100, 
          color=plotblue,
          name path=dimX
      ]
      {\dimX};
      \addplot [
          domain=0:1,
          samples=100, 
          color=black,
          style=dashed,
          name path=ux
      ]
      {\u*(1-x) + \dimX*x};
      \addplot [
          domain=0:1,
          samples=100, 
          color=gray,
          style=dashed,
          name path=kx
      ]
      {\k*(1-x) + \dimX*x};
      \addplot[plotblue] fill between[of=ux and dimX, soft clip={domain=0:1}];
      
      \end{axis}
    \end{tikzpicture}
  \end{subfigure}
  \caption{In order to improve the higher-dimensional exceptional set estimates for a Borel set $X \subseteq \rd$ at $u$, we need the Fourier spectrum of $X$ to intersect the  shaded region. Left: when $\hd X\leq k$. Right: when $\hd X> k$.}
  \label{fig:PSImprovement}
  \end{figure}
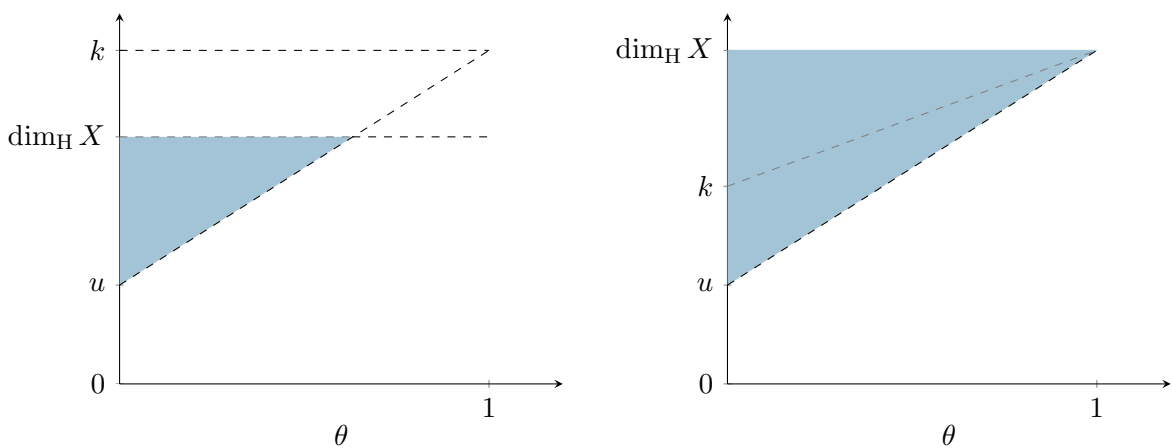

 Finally, considering large $\theta$, we obtain the following corollary taking the limit as $\theta\to1$

  \begin{cor}
    Let $X \subseteq \rd$ be a Borel set with $\hd X >k$ and 
   \[
   D = \overline{\partial_{-}}\fs X\big|_{\theta =1}  = \limsup_{\theta\to1} \frac{\hd X - \fs X}{1-\theta},
   \]
   be the upper left semi-derivative of $\fs X$ at $\theta = 1$.  If $D <\hd X -u$ for some $u\in[0,k]$ then there is necessarily improvement on Peres--Schlag's inequality \eqref{eq:peresschlagbound}.
   \end{cor}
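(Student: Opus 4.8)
\begin{proof}[Proof sketch / plan]
The plan is to deduce the corollary from the improvement criterion \eqref{eq:improvePS} already isolated in the discussion above: the Fourier spectrum improves on Peres--Schlag's bound \eqref{eq:peresschlagbound} at the parameter $u$ as soon as there is some $\theta \in (0,1]$ with $u(1-\theta) + \theta \hd X < \fs X$. The endpoint $\theta = 1$ is of no use here, since there $\fs X = \hd X$ while $u \le k < \hd X$, so the inequality fails; hence the whole task is to manufacture such a $\theta$ in the open interval $(0,1)$ out of the semi-derivative hypothesis $D < \hd X - u$.
\end{proof}

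\noindent\textbf{Expanded plan.} First I would rewrite the target inequality in a form that exposes the left semi-derivative at $\theta = 1$. For $\theta \in (0,1)$ we have $1-\theta > 0$, and subtracting $\hd X$ from both sides, factoring out $(1-\theta)$, and dividing through shows that $u(1-\theta) + \theta \hd X < \fs X$ is equivalent to
\[
\frac{\hd X - \fs X}{1-\theta} < \hd X - u .
\]
This is the only genuinely computational step and it is pure rearrangement; the one thing to watch is the direction of the inequality when multiplying by $-1$. I would also record the standing facts (from \cite{Fra22}) that $\fs X$ is continuous and non-decreasing in $\theta$ with $\fs X = \hd X$ at $\theta = 1$ and $\fs X \le \hd X$ throughout, so that the numerator above tends to $0$ as $\theta \to 1$ and $D$ is a well-defined element of $[0,\infty]$ (and the hypothesis in particular forces $\hd X > u$, which is automatic since $u \le k < \hd X$ anyway).

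Finally I would invoke the definition of $\limsup$: since $D = \limsup_{\theta \to 1} \frac{\hd X - \fs X}{1-\theta} < \hd X - u$, taking $\varepsilon = \tfrac12\big(\hd X - u - D\big) > 0$ there is $\delta \in (0,1)$ with $\frac{\hd X - \fs X}{1-\theta} < D + \varepsilon < \hd X - u$ for every $\theta \in (1-\delta, 1)$. Fixing any such $\theta$ yields the displayed inequality, hence \eqref{eq:improvePS} for this $\theta$, and comparing the bound of Corollary~\ref{thm:exceptionalFouriercoro} at this $\theta$ with \eqref{eq:peresschlagbound} gives the asserted improvement. I do not expect a real obstacle: the only points needing a little care are keeping the chosen $\theta$ inside $(0,1)$ so that Corollary~\ref{thm:exceptionalFouriercoro} applies, and observing that the strict inequality $k(d-k) + \tfrac{u - \fs X}{\theta} < k(d-k) + u - \hd X$ constitutes a genuine improvement precisely in the regime where the Peres--Schlag exponent is non-negative — when it is negative, both estimates already force the exceptional set to be empty.
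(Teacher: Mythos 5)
Your proposal is correct and follows exactly the route the paper intends: rearrange the improvement criterion \eqref{eq:improvePS} into $\tfrac{\hd X - \fs X}{1-\theta} < \hd X - u$, and then observe that the $\limsup$ hypothesis produces a $\theta \in (0,1)$ realising it. The algebraic equivalence is verified correctly and the $\limsup$ step is handled cleanly, so this matches the paper's one-line justification ``taking the limit as $\theta \to 1$'' in full.
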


\subsection{Dimension bounds for the set of projections with empty interior}

Following \cite[Corollary 6.2]{PS00}, we use the previous exceptional set estimate to obtain results regarding the interior of projections. Note first that if $\fs X >2k$ for some $\theta\in[0,1]$, then $\hd X >2k$ and by \cite[Corollary~5.12~(d)]{Mat15}, $P_{V}(X)$ has non-empty interior for $\gamdk$ almost all $V\in\gdk$.

\begin{cor}
  Let $X\subseteq\rd$ be a Borel set and $1\leq k<d$ be an integer. If $\fs X >2k$ for some $\theta\in(0,1]$, then
  \begin{equation*}
    \hd\{ V\in\gdk : P_{V}(X)~\text{has empty interior} \}\leq k(d-k) + \inf_{\theta\in(0,1]}\frac{2k - \fs X}{\theta} < k(d-k).
\end{equation*}
\end{cor}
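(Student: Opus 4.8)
The plan is to follow the route of \cite[Corollary~6.2]{PS00}, converting the ``empty interior'' condition into a statement about the Fourier spectrum of the projected measure and then invoking the measure version of Theorem~\ref{thm:exceptionalFourier}. Fix $\theta_0\in(0,1]$ with $\fd^{\theta_0}X>2k$ (such $\theta_0$ exists by hypothesis, and in particular $d>2k$ since $\fd^{\theta_0}X\le\hd X\le d$), and for a small $\varepsilon>0$ choose $\mu\in\M(X)$ with $\min\{\fd^{\theta_0}\mu,d\}>\fd^{\theta_0}X-\varepsilon>2k$, so that $\fd^{\theta_0}\mu>2k$ and $\fd^{\theta_0}\mu>\fd^{\theta_0}X-\varepsilon$.

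The key step is the implication: \emph{if $\fd^{\theta_0}\mu_V>2k$, then $P_V(X)$ has non-empty interior}. I would establish this by showing $\widehat{\mu_V}\in L^1(\rk)$, just as in the classical case $\theta_0=1$ underlying Peres--Schlag's argument. Pick $s\in(2k,\fd^{\theta_0}\mu_V)$, so $\J_{s,\theta_0}(\mu_V)<\infty$. Writing $|\widehat{\mu_V}(y)|=\big(|\widehat{\mu_V}(y)|^{2/\theta_0}|y|^{s/\theta_0-k}\big)^{\theta_0/2}\cdot|y|^{-(s-k\theta_0)/2}$ and applying H\"older's inequality with conjugate exponents $2/\theta_0$ and $2/(2-\theta_0)$ yields
\[
\int_{|y|>1}\big|\widehat{\mu_V}(y)\big|\,dy\;\le\;\J_{s,\theta_0}(\mu_V)^{1/2}\Big(\int_{|y|>1}|y|^{-(s-k\theta_0)/(2-\theta_0)}\,dy\Big)^{(2-\theta_0)/2},
\]
where the final integral converges precisely because $s>2k$. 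Combining with $|\widehat{\mu_V}|\le\mu(\rd)$ on $\{|y|\le1\}$ gives $\widehat{\mu_V}\in L^1(\rk)$, so Fourier inversion shows $\mu_V$ has a non-negative continuous density $g$ with $\int g=\mu(\rd)>0$. Hence $g>0$ on some non-empty open set $U\subseteq\spt\mu_V$; since $\spt\mu$ is compact and $P_V$ continuous, $\spt\mu_V=P_V(\spt\mu)\subseteq P_V(X)$, so $U\subseteq P_V(X)$.

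Taking contrapositives, $\{V\in\gdk:P_V(X)\text{ has empty interior}\}\subseteq\{V\in\gdk:\fd^{\theta_0}\mu_V<2k+\delta\}$ for every $\delta>0$, and the measure version of Theorem~\ref{thm:exceptionalFourier} applied with $u=2k+\delta$ at $\theta_0$, followed by letting $\delta\to0$ and then $\varepsilon\to0$, gives
\[
\hd\{V\in\gdk:P_V(X)\text{ has empty interior}\}\;\le\;\max\Big\{0,\;k(d-k)+\tfrac{2k-\fd^{\theta_0}X}{\theta_0}\Big\}.
\]
As $\theta_0$ ranges over all parameters with $\fd^{\theta_0}X>2k$ this bound only improves, while any $\theta$ with $\fd^{\theta}X\le2k$ contributes a value $\ge k(d-k)$ of $k(d-k)+(2k-\fs X)/\theta$; hence the right-hand side equals $\max\{0,\,k(d-k)+\inf_{\theta\in(0,1]}(2k-\fs X)/\theta\}$, which is $<k(d-k)$ since the infimum is strictly negative (being bounded above by the strictly negative value at $\theta_0$), as claimed. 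I expect the only genuinely delicate point to be deducing non-empty interior of $P_V(X)$ itself (rather than of its closure) from continuity of the density of $\mu_V$, which uses the compactness of $\spt\mu$ built into the definition of $\M(X)$; the H\"older estimate is routine and essentially reproduces a computation already appearing in the proof of Theorem~\ref{thm:exceptionalFourier}.
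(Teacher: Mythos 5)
Your proposal is correct and follows the paper's own approach: convert the ``empty interior'' condition into a bound on the Fourier spectrum of $\mu_V$ via the inclusion $\{V : P_V(X)\text{ has empty interior}\}\subseteq\{V:\fs\mu_V\le 2k\}$, and then apply the measure version of Theorem~\ref{thm:exceptionalFourier}. Your H\"older computation showing $\widehat{\mu_V}\in L^1(\rk)$ directly at the parameter $\theta_0$ is in fact a more explicit (and more careful) justification of the key step than the paper's terse appeal to ``$\widehat{\mu_V}\in L^2(\rk)$ and so $\mu_V$ is continuous,'' which elides exactly the integrability argument you spell out.
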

\begin{proof}
  Fix $\theta \in (0,1]$,  let  $\varepsilon>0$ and choose $\mu \in \mathcal{M}(X)$ such that $\fs \mu \geq \fs X - \varepsilon$. We will use the following inclusion of sets, which follows from the fact that if $\fs\mu_{V}>2k$, then $\widehat{\mu_{V}}\in L^2(\rk)$ and so $\mu_{V}$ is continuous and $P_V(X)$ has non-empty interior. Therefore,
  \begin{align*}
    \{ V\in \gdk : P_{V}(X)~\text{has empty interior} \}&\subseteq \{ V\in \gdk : \mu_{V}~\text{not continuous} \}\\
    &\subseteq \{ V\in \gdk : \fs\mu_{V}\leq 2k \}\\
    &\subseteq \{ V\in \gdk : \fs\mu_{V}<2k+\varepsilon \}.
  \end{align*}
  Theorem~\ref{thm:exceptionalFourier} gives an upper bound for the dimension of the final set in this chain of inclusions and therefore, since $\theta\in(0,1]$ was arbitrary, 
  \begin{align*}
      \hd \{ V\in \gdk : P_{V}(X)~\text{has empty interior} \} &\leq k(d-k) + \inf_{\theta\in(0,1]}\frac{2k+\eps-\fs\mu}{\theta}\\
&\leq k(d-k) + \inf_{\theta\in(0,1]}\frac{2k+2\eps-\fs X}{\theta}
  \end{align*}
and letting $\varepsilon \to 0$  proves the result.
\end{proof}

\subsection{Continuity of the dimension of the exceptional set}

Proposition~\ref{propo:counter_example} showed us that for sets $X$, the dimension of the set of  exceptional directions can be discontinuous at $u = \fd X$.  One of  the questions which motivated our investigation in the first place was to determine conditions under which continuity of the dimension of the set of  exceptional directions could be recovered at  $u = \fd X$.   We show in the following proposition that such a condition can be given in terms of the Fourier spectrum.  There is an analogous result for measures, which we leave to the reader to formulate.  

\begin{prop}\label{prop:continuity}
Let $X$ be a Borel set in $\rd$ and let
\[
D = \underline{\partial_{+}}\fs X\big|_{\theta =0} = \liminf_{\theta \to 0} \frac{\fs X - \fd X}{\theta}
\]
be the lower right semi-derivative of $\fs X$ at $\theta = 0$.  If $D \geq k(d-k)$, then the function  $u\mapsto\hd \{ V\in\gdk : \hd P_{V}(X)<u \}$ is continuous at $u = \fd X$.
\end{prop}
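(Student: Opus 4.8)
The plan is to deduce continuity at $u=\fd X$ from \emph{right}-continuity there, since the function
$g(u):=\hd\{V\in\gdk:\hd P_V(X)<u\}$ vanishes identically for $u\le \fd X$. Indeed, by the observation at the start of Section~\ref{section:FDproj} (equivalently Proposition~\ref{prop:improveOberlin} at $\theta=0$) we have $\hd P_V(X)\ge\fd P_V(X)\ge\min\{k,\fd X\}$ for every $V$; hence, once $\fd X<k$, the exceptional set $\{V:\hd P_V(X)<u\}$ is empty whenever $u\le\fd X$, so $g\equiv 0$ on $(-\infty,\fd X]$ and $g$ is automatically left-continuous at $\fd X$ with $g(\fd X)=0$. (If $\fd X>k$ then $\hd P_V(X)=k$ for all $V$ and $g$ is constant near $u=\fd X$, so there is nothing to prove; the borderline value $\fd X=k$ is degenerate and I would simply exclude it.) Thus it remains to prove $\lim_{u\to(\fd X)^+}g(u)=0$.

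For this I would feed Corollary~\ref{thm:exceptionalFouriercoro} a single, $u$-dependent scale $\theta$: for each fixed $\theta\in(0,1]$ and $0\le u\le k$ it gives
\[
g(u)\ \le\ \max\Big\{0,\ k(d-k)+\tfrac{u-\dim^{\theta}_{\mathrm F}X}{\theta}\Big\}.
\]
So, fixing $\varepsilon>0$, it suffices to produce for every $u$ slightly larger than $\fd X$ a scale $\theta=\theta(u)\in(0,1]$ with $k(d-k)+\tfrac{u-\dim^{\theta}_{\mathrm F}X}{\theta}<\varepsilon$, i.e.\ with $\dim^{\theta}_{\mathrm F}X>u+\theta\big(k(d-k)-\varepsilon\big)$. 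The correct choice of $\theta(u)$ must tend to $0$ as $u\to(\fd X)^+$, at exactly the rate dictated by the Dini derivative $D$.

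To pin down that rate I would unpack the hypothesis: since $D=\liminf_{\theta\to0}\tfrac{\dim^{\theta}_{\mathrm F}X-\fd X}{\theta}\ge k(d-k)>k(d-k)-\varepsilon$, I can fix $D'$ with $k(d-k)-\varepsilon<D'<D$, and then there is $\theta_0\in(0,1]$ such that $\dim^{\theta}_{\mathrm F}X>\fd X+D'\theta$ for all $\theta\in(0,\theta_0)$. Putting $c:=D'-k(d-k)+\varepsilon>0$ and $\theta(u):=\tfrac{u-\fd X}{c}$, a one-line computation gives $\fd X+D'\theta(u)=u+\big(k(d-k)-\varepsilon\big)\theta(u)$, so the required inequality holds precisely when $\theta(u)\in(0,\theta_0)$, which is the case as soon as $\fd X<u<\fd X+\min\{c\,\theta_0,\,k-\fd X\}$. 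Inserting this into the displayed bound from Corollary~\ref{thm:exceptionalFouriercoro} yields $g(u)\le\varepsilon$ on that right-neighbourhood of $\fd X$, and letting $\varepsilon\to0$ finishes the proof.

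I do not expect a genuine analytic obstacle: the argument is essentially a bookkeeping exercise coupling the auxiliary scale $\theta$ to $u$ and combining the exceptional-set estimate of Corollary~\ref{thm:exceptionalFouriercoro} with the definition of $D$. The two points needing care are (i) keeping $\theta(u)$ in $(0,1]$ and $u$ in $[0,k]$, which confines us to a small right-neighbourhood of $\fd X$ (harmless, as only the limit matters), and (ii) isolating the degenerate case $\fd X\ge k$ at the outset as above. This should be recorded as the clean sufficient condition, in terms of the Fourier spectrum, that rules out the jump discontinuity exhibited in Proposition~\ref{propo:counter_example}.
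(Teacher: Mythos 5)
Your proof is correct and follows essentially the same route as the paper: both couple the auxiliary parameter $\theta$ to $u-\fd X$ so that $\theta\to0$ as $u\to(\fd X)^+$, feed this into Corollary~\ref{thm:exceptionalFouriercoro}, and invoke the definition of $D$ as a lower Dini derivative to force the bound below any $\varepsilon>0$ (the paper's choice is $u=\fd X+\varepsilon^2$ with $\theta=\varepsilon$, yours is $\theta(u)=(u-\fd X)/c$; these are equivalent reparametrisations). Your explicit treatment of the cases $\fd X\ge k$ is a small but worthwhile point of care that the paper leaves implicit.
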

\begin{proof}
Let $\varepsilon\in(0,1)$  and consider $u = \fd X + \varepsilon^2$.  Corollary \ref{thm:exceptionalFouriercoro} gives that  
	\begin{align*}
		\hd \big\{ V\in \gdk : \hd  P_{V}(X)<& \fd X + \varepsilon^2 \big\}\\
    &\leq \max\bigg\{0, k(d-k)+\inf_{\theta\in(0,1]}\frac{ \fd X + \varepsilon^2-\fs  X}{\theta} \bigg\}\\
&\leq \max\bigg\{0, k(d-k)+  \varepsilon-\frac{ \fd^\varepsilon X  -\fd  X}{\varepsilon} \bigg\}\\
& \to 0
	\end{align*}
as $\varepsilon \to 0$ provided $D \geq k(d-k)$, which proves the desired continuity result. 
\end{proof}

We know by \cite[Proposition~4.2]{CFdO24} that for any Borel set $X\subseteq\rd$, $\underline{\partial_{+}}\fs X\big|_{\theta =0}\leq d$. Therefore,  in order to  establish continuity of the dimension of the exceptional set from  Proposition~\ref{prop:continuity}, it is necessary to have $k(d-k)\leq d$.  This is only true for the families $G(d,1)$, $G(d,d-1)$, and the special case $G(4,2)$.

In Proposition~\ref{propo:counter_example} we built a non-Salem set $X\subseteq\rr$ for which the dimension of the exceptional set was discontinuous at $\fd X$. However, like Salem sets, the set $X$ satisfies $\partial_{+}\fs X\big|_{\theta=0}=0$. To see this, recall that $X$  was the union of a set $A$ coming from Lemma~\ref{lemma:sharp1} and  a Salem set $B$.  With a little more work, one can show that $\fd A < \fd B$. Thus, there must exist  $\lambda\in(0,1)$ such that $\fd^{\lambda}A = \fd B$ and then for all $\theta\leq \lambda$, since $X = A\cup B$, $\fs X = \fs B$, which gives $\partial_{+}\fs X\big|_{\theta=0}=0$. This raises the following question.

\begin{ques}
  Is $\underline{\partial_{+}}\fs X\big|_{\theta=0}>0$ sufficient to guarantee continuity of the dimension of the exceptional set at $u = \fd X$? Or perhaps  $\underline{\partial_{+}}\fs X\big|_{\theta=0}\geq \rho$ for some $\rho>0$? (We know from the above that $\rho= k(d-k)$ suffices.)
\end{ques}

\section{An explicit example}\label{sec:example}

Here we provide an explicit example where we exploit  dynamical structure to get good information about  the Fourier spectrum  at $\theta=1/2$.  We feed this information into Corollary \ref{thm:exceptionalFouriercoro} and use this to showcase how our results compare with the best known general bounds in a representative situation.    In general, computing the Fourier spectrum explicitly is difficult and so our example is by necessity quite simple.  In particular, one may attempt to bound the dimension of the exceptional set directly using the specific structure at hand or by directly quoting the extensive literature on self-similar and self-affine sets---and probably obtain much stronger results---but we do not pursue this here. The  example  we consider is a self-affine set $X$  in $\mathbb{R}^3$ built as the product of three self-similar sets in the line with different contraction ratios. We design the example such that the sumset  $X+X$ is itself a  self-affine  set  satisfying the open set condition. Then, for $\mu$, a  self-affine measure on $X$ (of maximal Hausdorff dimension), we apply \cite[Lemma~6.2]{Fra22} to get
\[
\dim_{\textup{F}}^{1/2} \mu = \frac{\sd (\mu \ast \mu)}{2},
\]
and we can compute the right-hand side explicitly since $\mu \ast \mu$ is again a  self-affine measure and $\min\{\sd (\mu \ast \mu),2\}$ coincides with the $L^2$-dimension, for which there is an explicit formula.   The Fourier spectrum then gives non-trivial information in the sense that it beats the trivial lower bound $\fs X \geq \theta \hd X$ provided $\sd (\mu \ast \mu) > \hd X$  (and this is indeed the case).  This strategy can be employed in much greater generality than we give here, but we leave further examples to the reader; recall Corollary \ref{thm:exceptionalFourierConvolutions} for a general formulation. Better bounds would be obtained by considering smaller $\theta$, but the Fourier spectrum becomes harder to estimate in this case.

Fix $\alpha, \beta, \gamma \in (0,1/3]$ and let $E_\alpha, E_\beta, E_\gamma \subseteq [0,1]$ denote the middle $(1-2\alpha)$,  $(1-2\beta)$ and  $(1-2\gamma)$ Cantor sets, respectively.  Let $X = E_\alpha \times E_\beta\times E_\gamma$, which is a self-affine set of Hausdorff  dimension 
\[
\hd X = \frac{\log 2}{-\log\alpha} + \frac{\log 2}{-\log\beta} + \frac{\log 2}{-\log\gamma}  .
\]
 In many cases it is clear that $\fd X = 0$ (for example if at least one of $\alpha, \beta, \gamma$ is the reciprocal of a Pisot number, see \cite[Theorem 8.3]{Mat15}) and we are unaware of any examples where it is known that $\fd X >0$.  Let $\mu$ be the uniform measure on $X$, that is, the self-affine measure defined  with 8 equal weights.  Then $\mu \ast \mu$ is a self-affine measure associated with a system of 27 maps (not with equal weights) and by standard results (or direct calculation)
\begin{align*}
 \sd (\mu \ast \mu) &= \frac{\log \left( (1/4)^2+(1/2)^2 + (1/4)^2 \right)}{\log \alpha} + \frac{\log \left( (1/4)^2+(1/2)^2 + (1/4)^2 \right)}{\log \beta} \\
 &\quad \quad +   \frac{\log \left( (1/4)^2+(1/2)^2 + (1/4)^2 \right)}{\log \gamma}\\ 
&= \frac{\log (8/ 3)}{-\log \alpha}+ \frac{\log (8/ 3)}{-\log \beta} + \frac{\log (8/ 3)}{-\log \gamma}.
\end{align*}
By Corollary \ref{thm:exceptionalFourierConvolutions} (setting $n=2$) we get
\begin{equation*}
		\hd \{ V\in G(3,k) : \hd  P_{V}(X)<u \}\leq  \max\left\{ 0,  2+2u-\frac{\log (8/ 3)}{-\log \alpha}- \frac{\log (8/ 3)}{-\log \beta} - \frac{\log (8/ 3)}{-\log \gamma} \right\}
	\end{equation*}
for all $u \in [0,k]$ where $k \in \{1,2\}$. See Figure~\ref{fig:boundimprovement} %this ref is for the fig we choose below
for a comparison of the above bound with Peres--Schlag's \eqref{eq:peresschlagbound} and Mattila's \eqref{eq:mattilaboundgeneral} for the cases $k = 1$ and $k=2$.  Interesting to  observe is the fact that in Figure~\ref{fig:boundimprovement} the bound given with the Fourier spectrum shows that dimension of the exceptional set has $0$ Hausdorff dimension in a range of values of $u$ for which the other two bounds give positive dimension.

  %%% THIRD FIGURE   
  \begin{figure}[H]
    \begin{subfigure}{.5\textwidth}
      %THIS IS THE LEFT SIDE K=1
      % alpha = 1/3,  beta = 1/4, gamma=1/5, dimX\approx 1.56
      \centering
      \begin{tikzpicture}[scale = 0.7]
        \def\a{1/3}
        \def\b{1/4}
        \def\c{1/5}
        \def\k{1}
        \def\dimX{ln(2)*(-1/ln(\a) - 1/ln(\b) - 1/ln(\c))}
        \def\half{ln(8/3)*(-1/ln(\a) - 1/ln(\b) - 1/ln(\c))}
        \begin{axis}[
            axis lines = left,
            ymin = 0,
            ymax = 2.1,
            xmin = 0,
            xmax = 1.1,
            xlabel=$u$,
            xtick = {\k}, % the min betwen k=1 and dimX=1.56 is k
            ytick = {0,1,2},
            xticklabels = {$1$},
            yticklabels = {$0$,$1$,$2$},
            legend style={ at={(2,0.1)},
    anchor=north east,at={(axis description cs:1,0.3)} }
        ]
        \addplot [ %PERES--Schlag
            domain=0:\k,
            samples=100, 
            color=lightblue,
            style=thick,
            name path=PS
        ]
        {min(\k*(3-\k), max(0, 2-\dimX + x))};
        \addplot [ %MATTILA
            domain=0:\k,
            samples=100, 
            color=OliveGreen,
            style=thick,
            name path=Mat2
        ]
        {min(\k*(3-\k), max(0, \k*(3-\k-1)+x))};
         \addplot [ %FRASER--DEORELLANA
            domain=0:\k,
            samples=100, 
            color=darkred,
            style=thick,
            name path=FdO
        ]
        {min(\k*(3-\k), max(0, 2 + 2*x - \half))};
        %THIS IS THE LEGEND
      \legend{Peres--Schlag, Mattila $k=1$, Corollary~\ref{thm:exceptionalFouriercoro}}
        \end{axis}
      \end{tikzpicture}
    \end{subfigure}%
    \begin{subfigure}{.5\textwidth}
      %THIS IS THE RIGHT SIDE K=2
      % alpha = 1/3,  beta = 1/4, gamma=1/5, dimX\approx 1.56
      \centering
      \begin{tikzpicture}[scale = 0.7]
        \def\a{1/3}
        \def\b{1/4}
        \def\c{1/5}
        \def\k{2}
        \def\dimX{ln(2)*(-1/ln(\a) - 1/ln(\b) - 1/ln(\c))}
        \def\half{ln(8/3)*(-1/ln(\a) - 1/ln(\b) - 1/ln(\c))}
        \begin{axis}[
            axis lines = left,
            ymin = 0,
            ymax = 2.1,
            xmin = 0,
            xmax = 1.7,
            xlabel=$u$,
            xtick = {\dimX}, % the min betwen k=1 and dimX=1.56 is dimX
            ytick = {0,2},
            xticklabels = {$\hd X$},
            yticklabels = {$0$, $2$},
            legend style={ at={(2,0.1)},
    anchor=north east,at={(axis description cs:1,0.3)} }
        ]
        \addplot [ %PERES--Schlag
            domain=0:\dimX,
            samples=100, 
            color=lightblue,
            style=thick,
            name path=PS
        ]
        {min(\k, max(0, 2-\dimX + x))};
        \addplot [ %MATTILA
            domain=0:\dimX,
            samples=100, 
            color=OliveGreen,
            style=thick,
            name path=Mat2
        ]
        {min(\k, max(0, \k*(3-\k-1)+x))};
         \addplot [ %FRASER--DEORELLANA
            domain=0:\dimX,
            samples=100, 
            color=darkred,
            style=thick,
            name path=FdO
        ]
        {min(\k, max(0, 2 + 2*x - \half))};
        %THIS IS THE LEGEND
      \legend{Peres--Schlag, Mattila $k=2$, Corollary~\ref{thm:exceptionalFouriercoro}}
  
        %DISCONTINUITIES
        % \addplot [% PS'S FOR u>dimX
        % domain=\dimX:1.4,
        % samples=100, 
        % color=lightblue,
        % style=thick,
        % name path=PS
        % ]
        % {\k};
        % \addplot [% MATTILA'S FOR u>dimX
        %     domain=\dimX:1.4,
        %     samples=100, 
        %     color=OliveGreen,
        %     style=thick,
        %     name path=Mat2
        % ]{\k};
        % \addplot [%FdO's FOR u>dimX
        %     domain=\dimX:1.4,
        %     samples=100, 
        %     color=darkred,
        %     style=thick,
        %     name path=FdO
        % ]
        % {\k};
        \end{axis}
      \end{tikzpicture}
    \end{subfigure}
    \caption{Exceptional set estimates given by the Fourier spectrum in red, Peres--Schlag's \eqref{eq:peresschlagbound} in blue and Mattila's \eqref{eq:mattilaboundgeneral} in green, for $\alpha = 1/3$, $\beta = 1/4$ and $\gamma=1/5$, with $\hd X\approx 1.56$. Left: when $k=1$. Right: when $k=2$.}\label{fig:boundimprovement}
    \end{figure}
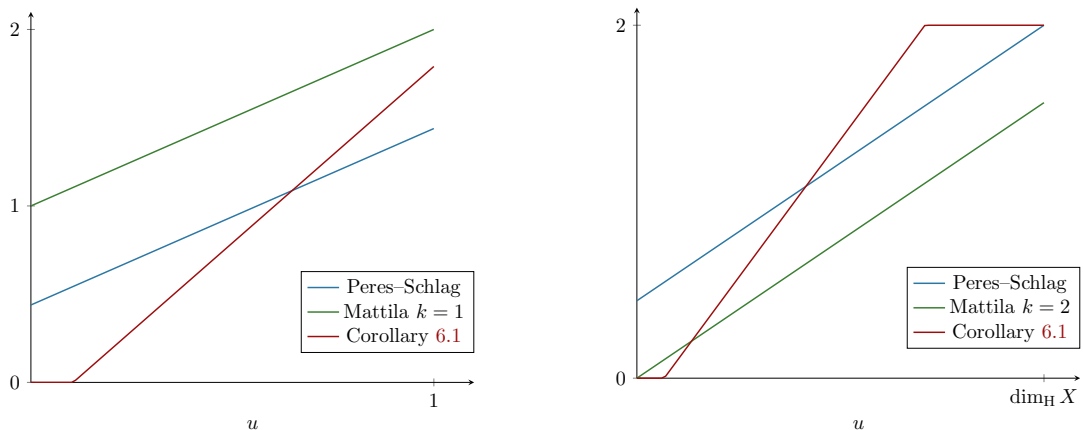

\section*{Acknowledgements} 

We thank Tuomas Orponen,  Alex Rutar and Pablo Shmerkin for helpful conversations and comments.

% \bibliographystyle{alpha}
% \bibliography{/Users/anaemiliadeorellana/Library/CloudStorage/OneDrive-UniversityofStAndrews/Research/Bibliography.bib}
\end{document}